\begin{document}

\newtheorem{problem}{Problem}

\newtheorem{theorem}{Theorem}[section]
\newtheorem{corollary}[theorem]{Corollary}
\newtheorem{definition}[theorem]{Definition}
\newtheorem{conjecture}[theorem]{Conjecture}
\newtheorem{question}[theorem]{Question}
\newtheorem{lemma}[theorem]{Lemma}
\newtheorem{proposition}[theorem]{Proposition}
\newtheorem{quest}[theorem]{Question}
\newtheorem{example}[theorem]{Example}
\newenvironment{proof}{\noindent {\bf
Proof.}}{\rule{2mm}{2mm}\par\medskip}
\newenvironment{proofof}{\noindent {\bf
Proof of the Theorem 6.1.}}{\rule{2mm}{2mm}\par\medskip}
\newcommand{\remark}{\medskip\par\noindent {\bf Remark.~~}}
\newcommand{\pp}{{\it p.}}
\newcommand{\de}{\em}
\newcommand{\norm}[1]{\left\lVert#1\right\rVert}

\title{  {The spectral radius  of graphs with no intersecting triangles}\thanks{S. M. Cioab\u{a} is supported by NSF grants DMS-1600768 and CIF-1815922 and a JSPS Fellowship. L. Feng  is supported by  NSFC (Nos.11871479, 11671402),  Hunan Provincial Natural Science Foundation (2016JJ2138,  2018JJ2479) and  Mathematics and Interdisciplinary Sciences Project of CSU.   M. Tait is partially supported by NSF grant DMS-1855530. X.-D. Zhang is supported by  NSFC  (Nos.11531001, 11971311); the Montenegrin-Chinese Science and Technology Cooperation Project (No.3-12). \newline
 E-mail addresses: cioaba@udel.edu(S. Cioaba), fenglh@163.com (L. Feng),  michael.tait@villanova.edu (M. Tait) , $^\dag$xiaodong@sjtu.edu.cn (X.-D. Zhang).}}

\author{
 Sebastian Cioab\u{a}$^a$, Lihua Feng$^b$,  Michael Tait$^c$, Xiao-Dong Zhang$^{d\dag}$\\
{\small $^a$Department of Mathematical Sciences, University of Delaware}\\
{\small  Newark, DE 19716-2553, USA } \\
{\small $^b$School of Mathematics and Statistics, Central South University} \\
{\small New Campus,  Changsha, Hunan, 410083, P.R. China}\\
{\small $^c$Department of Mathematics and Statistics, Villanova University}\\
{\small Villanova, PA,  USA} \\
{\small $^d$School of Mathematical Sciences, MOE-LSC, SHL-MAC,
Shanghai Jiao Tong University}\\
{\small Shanghai 200240, P. R. China}}
\date{}
\maketitle

\vspace{-0.5cm}

\begin{abstract}
A graph on $2k+1$ vertices consisting of $k$ triangles which intersect in exactly one
common vertex is called a $k$-fan and denoted by $F_k$.
This paper aims to determine the graphs of order $n$  that have the maximum (adjacency) spectral radius among all graphs containing no $F_k$,  for $n$ sufficiently large.
 \end{abstract}

{{\bf Key words:}   Spectral radius; $k$-fan; Extremal graph. }

{{\bf AMS Classification:}   05C50; 05C35. }

\section{Introduction}
In this paper, only simple and undirected graphs are considered. Let $G$ be a simple connected  graph with vertex set $V(G)=\{v_1, \ldots, v_n\}$ and edge set $E(G)=\{e_1, \ldots, e_m\}$.  Let $d(v_i)$ (or $d_G(v_i)$) be the degree of a vertex $v$ in $G$.
The \emph{adjacency matrix} of $G$ is $A(G)=(a_{ij})_{n \times n}$ with $a_{ij}=1$ if two vertices $v_i$ and $v_j$ are adjacent in $G$, and $a_{ij}=0$ otherwise.   The largest eigenvalue of $A(G)$, denoted by $\lambda (G)$ or $\lambda_1(G)$, is called the {\it spectral radius} of $G$.
In  spectral graph theory,   one of the  well-known problems  is the
Brualdi-Solheid problem \cite{brualdi}:  Given a set  of graphs, try to find a tight upper bound for the spectral radius in
this set  and characterize all  extremal graphs.
This problem is widely studied in the
literature for many classes of graphs, such as  graphs with cut vertices \cite{berman2001}, graphs with given diameter \cite{Hansen},  domination number
\cite{Stevanovic}, given size \cite{Stanley}, subgraphs of the hypercube \cite{cube}, graphs with Euler genus \cite{Ellingham} and given clique or independence number \cite{Wilf}.

The Brualdi--Solheid problem is with a Tur\'an-type flavor. The following problem regarding the adjacency  spectral radius
was proposed in \cite{NikiforovTuran}:
What is the maximum spectral radius of a graph $G$ on $n$
vertices without a subgraph isomorphic to a given graph $F$?
 For this problem,
 Fiedler and Nikiforov  \cite{FiedlerNikif} obtained tight sufficient conditions for
graphs to be Hamiltonian or traceable. Additionally, Nikiforov obtained spectral strengthenings of Tur\'an's theorem \cite{NikiforovTuran} and the K\H{o}vari-S\'os-Tur\'an theorem \cite{NikiforovKST} when the forbidden graphs are complete or complete bipartite respectively. This motivates further study for such question,
 see \cite{FengMonoshMath, FiedlerNikif,  WJLiuLAMA, NikiforovLAA10,   NikifSurvey}.

 The {\em Tur\'an number} of a graph $F$ is the maximum number of edges that may be in an $n$-vertex graph without a subgraph isomorphic to $F$, and is denoted by $\mathrm{ex}(n, F)$. A graph on $n$ vertices with no subgraph $F$ and with $\mathrm{ex}(n, F)$ edges is called an {\em extremal graph} for $F$ and we denote by $\mathrm{Ex}(n, F)$ the set of all extremal graphs on $n$ vertices for $F$. Understanding $\mathrm{ex}(n, F)$ and $\mathrm{Ex}(n, F)$ for various graphs $F$ is a cornerstone of extremal graph theory (see \cite{Alon, Conlon13, erdos1986, fox2011, FurediSurvey, Sidorenko} for surveys).

A graph on $2k+1$ vertices consisting of $k$ triangles which intersect in exactly one
common vertex is called a {\it $k$-fan} and denoted by $F_k$.

In \cite{Erdos95}, it is proved that
\begin{theorem}\label{Erdos95} \cite{Erdos95}
For every $k \geq 1$, and for every $n\geq 50k^2$, if a graph $G$ of order $n$ does not contain a copy of a $k$-fan, then
$e(G)\le \mathrm{ex}(n, F_k)$, where
\[\mathrm{ex}(n, F_k)= \left\lfloor \frac {n^2}{4}\right \rfloor+ \left\{
  \begin{array}{ll}
   k^2-k \quad~~  \mbox{if $k$ is odd,} \\
    k^2-\frac32 k \quad \mbox{if $k$ is even}.
  \end{array}
\right. \]
Furthermore, the number of edges is best possible.
\end{theorem}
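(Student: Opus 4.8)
\medskip\noindent\textbf{Proof proposal.} The plan is to prove matching lower and upper bounds, the excess term $k^2-k$ (resp.\ $k^2-\tfrac32k$) being the maximum number of edges in a graph whose maximum degree and matching number are both at most $k-1$.

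\emph{Lower bound.} The basic observation is that $G$ contains $F_k$ if and only if some vertex $v$ has $\nu(G[N(v)])\ge k$, where $\nu$ is the matching number. So I would take $K_{\lceil n/2\rceil,\lfloor n/2\rfloor}$ with parts $A$ (the larger) and $B$ and add inside $A$ the edges of a graph $H$ on boundedly many vertices. For $v\in B$ one has $G[N(v)]=H$ (plus isolated vertices), and for $v\in A$ every edge of $G[N(v)]$ meets one of the $\deg_H(v)$ neighbours of $v$ inside $A$, each of which is joined to all of $B$, so $\nu(G[N(v)])=\deg_H(v)$. Hence such a $G$ is $F_k$-free precisely when $\nu(H)\le k-1$ and $\Delta(H)\le k-1$. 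For $k$ odd, $H=2K_k$ satisfies $\nu(H)=2\cdot\tfrac{k-1}2=k-1$ and $e(H)=k^2-k$; for $k$ even, an (almost) $(k-1)$-regular graph $H$ on $2k-1$ vertices has $\nu(H)\le k-1$ automatically and $e(H)=\lfloor(k-1)(2k-1)/2\rfloor=k^2-\tfrac32k$. This gives $F_k$-free graphs with $\lfloor n^2/4\rfloor+k^2-k$, resp.\ $\lfloor n^2/4\rfloor+k^2-\tfrac32k$, edges.

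\emph{Upper bound.} Let $G$ be an $F_k$-free graph on $n\ge50k^2$ vertices with $e(G)$ maximum; by the construction it suffices to show $e(G)\le\lfloor n^2/4\rfloor+k^2-k$ (resp.\ $+\,k^2-\tfrac32k$), and we may assume $e(G)\ge\lfloor n^2/4\rfloor$. Again the key local fact is that, for every $v$, the endpoints of a maximum matching of $G[N(v)]$ form a set $C_v\subseteq N(v)$ with $|C_v|\le2(k-1)$ meeting all edges of $G[N(v)]$, so $N(v)\setminus C_v$ is independent in $G$; applied to a vertex $v$ of maximum degree $\Delta\ge n/2-1$, this gives an independent set of size at least $n/2-2k$. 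Since $\chi(F_k)=3$, I would then apply the Erd\H{o}s--Simonovits stability theorem (or iterate the neighbourhood-cover fact) to obtain a partition $V=X\cup Y$ with $\bigl|\,|X|-|Y|\,\bigr|=O(k)$ and $e(G[X])+e(G[Y])=o(n^2)$, and I may assume $X\cup Y$ is a maximum cut, so that $\deg_X(u)\le\deg_Y(u)$ for every $u\in Y$ and symmetrically.

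The heart of the argument is then a cleaning step: using the edge-maximality of $G$ one shows that $G$ has the exact form $K_{X,Y}+H$ with $H=G[Y]$ --- a vertex of $X$ with a non-neighbour in $Y$ or a neighbour in $X$, or a vertex lying in a sparse part, can be rewired to gain an edge while staying $F_k$-free once $|X|,|Y|\gg k$, contradicting maximality. Granting this, $F_k$-freeness forces $\nu(H)\le k-1$ (from a vertex of $X$, whose neighbourhood induces $H$) and $\Delta(H)\le k-1$ (from a vertex of $Y$, whose $H$-neighbours can be matched into $X$ because $|X|>k-1$). Hence $e(H)\le k^2-k$ (resp.\ $k^2-\tfrac32k$) by the theorem of Chv\'atal and Hanson on graphs with bounded degree and matching number (equivalently, by a careful application of the Erd\H{o}s--Gallai matching bound), and $e(G)=|X|\,|Y|+e(H)\le\lfloor n^2/4\rfloor+e(H)$, which is the desired bound; analysing the equality case then pins down $\mathrm{Ex}(n,F_k)$.

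\emph{Main obstacle.} The constructions, the neighbourhood-cover observation, and the bound on $e(H)$ are routine; the real difficulty is the passage from ``$G$ is approximately $K_{n/2,n/2}$'' (an $o(n^2)$ error coming from stability) to the \emph{exact} extremal number. One must show that every edge not respecting the bipartition can be deleted by a local move that preserves $F_k$-freeness without losing edges, so that the only remaining irregularity is a graph $H$ on $O(k)$ vertices obeying $\Delta(H),\nu(H)\le k-1$. Handling the vertices of $Y$ with few neighbours in $X$, and ruling out a second ``dense spot'', is precisely where a hypothesis of the form $n\ge50k^2$ is consumed.
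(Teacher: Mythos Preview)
The paper does not prove this theorem at all; it is quoted from \cite{Erdos95} (Erd\H{o}s, F\"uredi, Gould, Gunderson) and used as a black box throughout the proof of the spectral result, Theorem~\ref{MainTH}. Consequently there is no ``paper's own proof'' to compare your proposal against.

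As a standalone sketch your lower-bound argument is correct and matches the extremal constructions $G^i_{n,k}$ described immediately after the statement. For the upper bound, however, your route through the Erd\H{o}s--Simonovits stability theorem is not the one taken in \cite{Erdos95}, and in fact cannot deliver the stated threshold $n\ge 50k^2$: stability, in the form you invoke, gives only an $o(n^2)$ error with an ineffective (or at best tower-type) dependence on the implicit constants, so the ``cleaning step'' you postpone would require $n$ enormously larger than $50k^2$. The original proof proceeds by direct counting from the key local observation you identified (each neighbourhood is covered by at most $2k-2$ vertices), extracting a large bipartite subgraph and bounding the surplus edges combinatorially, with all constants explicit. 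Your acknowledgement that the passage from ``approximately bipartite'' to the exact structure is the real obstacle is accurate, but in your outline that step is asserted rather than argued, and it is precisely there that the method diverges from, and loses quantitatively to, the original.
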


The extremal graphs $G^i_{n,k}$ ($i=1,2$) of Theorem \ref{Erdos95} are as follows.
For odd $k$ (where $n\geq 4k-1$) $G^1_{n,k}$ is constructed by taking a complete bipartite graph with color classes of size $\lceil \frac{n}{2}\rceil$ and $\lfloor \frac{n}{2}\rfloor$ and embedding two vertex
disjoint copies of $K_k$ in one side.
For even $k$ (where now $n\geq 4k-3$) $G^2_{n,k}$ is constructed by taking a complete equi-bipartite graph and embedding
 a graph with $2k-1$ vertices, $k^2-\frac32 k$ edges with maximum degree $k-1$ in one side. The graphs $G^1_{n,k}$ is unique up to isomorphism but $G^2_{n,k}$ is not.

 \medskip

 Our goal is to give the spectral counterpart of Theorem \ref{Erdos95}.
 The main result of this paper is the following.

 \begin{theorem}\label{MainTH}
Let $G$ be a graph of order $n$ that does not contain a copy of a $k$-fan.  For sufficiently large $n$, if $G$ has the maximal spectral radius, then
$$G \in \mathrm{Ex}(n, F_k).$$
\end{theorem}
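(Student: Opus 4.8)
The plan is to follow the now-standard ``stability'' strategy for spectral Turán problems (as used by Nikiforov and later refined by Tait--Timmons and others), combined with the exact edge-count result of Theorem~\ref{Erdos95}. Let $G$ be an $F_k$-free graph on $n$ vertices attaining the maximum spectral radius, let $\lambda = \lambda(G)$, and let $\mathbf{x}$ be a positive Perron eigenvector normalized so that $\max_v x_v = 1$, say attained at a vertex $z$. As a first step I would record the lower bound $\lambda(G) \ge \lambda(G^1_{n,k})$ (or $G^2_{n,k}$ in the even case), which is at least $\lambda(K_{\lceil n/2\rceil, \lfloor n/2\rfloor}) \ge \lfloor n^2/4\rfloor^{1/2} \sim n/2$; hence $\lambda \ge n/2 - O(1)$. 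Combining this with the eigenvalue equation $\lambda^2 x_z = \sum_{u \sim z}\sum_{w \sim u} x_w$ and the spectral bound $\lambda^2 \le \sum_{v}d(v) x_v / x_z$ together with Theorem~\ref{Erdos95}'s bound $e(G)\le n^2/4 + O(k^2)$, one forces $G$ to be spectrally extremal in a rough sense, and in particular $e(G) \ge n^2/4 - O(n)$.

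The second step is a \emph{stability} argument: from $e(G) \ge n^2/4 - o(n^2)$ and $F_k$-freeness, the triangle-removal/Andrásfai--Erdős--Sós type stability for $K_3$ (more precisely, the stability form of Theorem~\ref{Erdos95}) shows $G$ can be made bipartite by deleting $o(n^2)$ edges. I would then upgrade this to a structural statement about the Perron vector: there is a balanced-ish bipartition $V = A \cup B$ such that all but $o(n)$ vertices in each part have eigenvector weight $x_v = 1 - o(1)$ (because a vertex of large weight must see $\approx \lambda \approx n/2$ neighbors of large weight on the other side), and almost all pairs across $A,B$ are edges. The key local move is the standard one: if some vertex $v$ has more than a bounded number of neighbors inside its own part, or fails to be adjacent to many vertices on the other side, then deleting $v$'s ``bad'' edges and joining it completely to the opposite side strictly increases $\mathbf{x}^\top A \mathbf{x}$ without creating an $F_k$ — unless doing so is obstructed, which (after enough cleaning) it is only for a bounded set of vertices. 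This pins down that $G$ is obtained from a complete bipartite graph $K_{a,n-a}$ by adding a bounded number of edges inside the parts and deleting a bounded number across; a short convexity/Rayleigh-quotient computation then forces $a = \lceil n/2\rceil$.

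The third step is the exact analysis on this restricted family. Write $G = K_{\lceil n/2\rceil, \lfloor n/2\rfloor}$ with a graph $H_A$ of bounded order added in side $A$, $H_B$ in side $B$, and a bounded set $M$ of missing cross-edges; $F_k$-freeness translates into: $H_A$ has no matching of size $k$ and no vertex of degree $\ge k$ whose link-structure completes a fan, and symmetrically for $H_B$, and the missing edges $M$ cannot ``rescue'' too much. Using $\mathbf x^\top A \mathbf x = 2e(G) + $ (error terms that are quadratic in the deviations $1-x_v$) and the fact that each deviation $1-x_v$ is itself $O(\deg_{H}(v)/n)$, one shows $\lambda(G)$ is, up to $O(1/n)$, an increasing function of $e(G)$ among graphs in this family; so the spectral extremal graph must have the maximum number of edges, i.e.\ $e(G) = \mathrm{ex}(n,F_k)$, and must realize it with the eigenvector-optimal placement of the extra edges — which is exactly the description of $G^1_{n,k}$ / $G^2_{n,k}$. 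Hence $G \in \mathrm{Ex}(n,F_k)$.

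I expect the main obstacle to be the second step — turning coarse edge-count stability into the rigid statement ``$G$ differs from $K_{\lceil n/2\rceil,\lfloor n/2\rfloor}$ in only $O_k(1)$ edges, with controlled eigenvector weights.'' The delicate points are: (i) handling the $o(n)$ ``junk'' vertices of small eigenvector weight and showing they cannot be present at all in the optimum (a local switching argument must be run carefully so that no $F_k$ is ever created, which is where the precise fan structure — a $K_k$ versus a degree-$(k-1)$ graph on $2k-1$ vertices — enters and explains the parity split in Theorem~\ref{Erdos95}); and (ii) making the error terms in the Rayleigh quotient genuinely $o(1)$ relative to the $\Theta(1)$ gaps between the candidate extremal configurations, so that the comparison in step three is decisive. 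Everything after that is a finite optimization over how to embed a bounded graph with no $k$-matching into one side, which reproduces the Erdős--Füredi--Gould--Gunderson extremal graphs.
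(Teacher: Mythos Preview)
Your proposal is correct and follows essentially the same route as the paper: a spectral lower bound $\lambda\ge n/2$, an edge lower bound via triangle counting, stability through the triangle removal lemma plus F\"uredi's bipartite-stability result to get a near-balanced max cut, elimination of low-degree and ``wrong-side'' vertices by local switching, eigenvector entries forced to $1-O(k^2/n)$, balance of the bipartition via a Rayleigh-quotient comparison, and finally a Rayleigh comparison with a graph in $\mathrm{Ex}(n,F_k)$ to conclude $e(G)=\mathrm{ex}(n,F_k)$. The paper's only additions to your sketch are the explicit inequality $e(G)\ge \lambda_1^2-3t/\lambda_1$ (with $t$ the triangle count) and the Chv\'atal--Hanson bound $f(k-1,k-1)$ to control $e(G[S])+e(G[T])$; your anticipated obstacle in step two is exactly where the paper spends most of its effort.
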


We note that using eigenvalue interlacing, one may form an equitable partition of a graph in $\mathrm{Ex}(n, F_k)$ and determine its spectral radius as the root of a degree $3$ (if $k$ is odd) or degree $4$ (if $k$ is even) polynomial. We at last point out that, during our proof, we use the triangle removal lemma, and so it is difficult to present exactly how large we need our $n$ to be.

\section{Some Lemmas}

Let $G$ be a simple graph with matching number $\beta(G)$ and maximum degree $\Delta(G)$. For given two integers $\beta$ and $\Delta$, define $f(\beta, \Delta)=\max\{|E(G)|: \beta(G)\leq \beta, \Delta(G)\leq \Delta \}$. Chv\'atal and Hanson \cite{Chvatal76} obtained the following result.

\begin{theorem}[Chv\'atal and Hanson \cite{Chvatal76}]\label{Chvatal76}
For every two positive integers $\beta \geq 1$ and $\Delta \geq 1$, we have
$$f(\beta, \Delta)= \Delta \beta +\left\lfloor\frac{\Delta}{2}\right\rfloor
 \left \lfloor \frac{\beta}{\lceil{\Delta}/{2}\rceil }\right \rfloor
 \leq \Delta \beta+\beta.$$
\end{theorem}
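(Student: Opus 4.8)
The displayed bound $f(\beta,\Delta)\le\Delta\beta+\beta$ is a formal consequence of the exact evaluation, since $\lfloor\Delta/2\rfloor\le\lceil\Delta/2\rceil$ forces $\lfloor\Delta/2\rfloor\lfloor\beta/\lceil\Delta/2\rceil\rfloor\le\lceil\Delta/2\rceil\lfloor\beta/\lceil\Delta/2\rceil\rfloor\le\beta$. So I would concentrate on proving $f(\beta,\Delta)=\Delta\beta+\lfloor\Delta/2\rfloor\lfloor\beta/\lceil\Delta/2\rceil\rfloor$: a construction for the lower bound, and an inductive argument for the matching upper bound.

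\emph{Construction.} Writing $\beta=q\lceil\Delta/2\rceil+r$ with $q=\lfloor\beta/\lceil\Delta/2\rceil\rfloor$ and $0\le r<\lceil\Delta/2\rceil$, I would form the disjoint union of $q$ copies of a fixed graph $B$ together with one copy of $K_{r,\Delta}$. For even $\Delta$ take $B=K_{\Delta+1}$: maximum degree $\Delta$, matching number $\Delta/2=\lceil\Delta/2\rceil$, and $\binom{\Delta+1}{2}=\Delta\lceil\Delta/2\rceil+\lfloor\Delta/2\rfloor$ edges. For odd $\Delta$ take $B$ to be $K_{\Delta+2}$ with a near-perfect matching deleted together with one further edge incident to the uncovered vertex; this brings the maximum degree down to $\Delta$, keeps the matching number equal to $(\Delta+1)/2=\lceil\Delta/2\rceil$, and leaves $\tfrac12(\Delta^2+2\Delta-1)=\Delta\lceil\Delta/2\rceil+\lfloor\Delta/2\rfloor$ edges. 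Since $K_{r,\Delta}$ has (for $r<\lceil\Delta/2\rceil\le\Delta$) maximum degree $\Delta$, matching number $r$, and $r\Delta$ edges, the resulting graph has maximum degree $\Delta$, matching number $\beta$, and $q(\Delta\lceil\Delta/2\rceil+\lfloor\Delta/2\rfloor)+r\Delta=\Delta\beta+q\lfloor\Delta/2\rfloor$ edges, hitting the target value.

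\emph{Upper bound.} Set $m=\lceil\Delta/2\rceil$. I would prove, by induction on $|V(G)|$, that every graph $G$ with $\Delta(G)\le\Delta$ satisfies $e(G)\le\Delta\,\beta(G)+\lfloor\Delta/2\rfloor\lfloor\beta(G)/m\rfloor$; since the right side is nondecreasing in $\beta(G)$, this gives the upper bound on $f$. Isolated vertices are deleted for free. If $G$ is disconnected, apply the hypothesis to the components and add up, using $\lfloor a/m\rfloor+\lfloor b/m\rfloor\le\lfloor(a+b)/m\rfloor$ together with additivity of $e$ and $\beta$. If $G$ is connected with deficiency at least $2$ (that is, $|V(G)|\ge 2\beta(G)+2$), then $G$ is not factor-critical, so Gallai's lemma supplies a vertex $a$ lying in every maximum matching; then $\beta(G-a)=\beta(G)-1$, and $e(G)=e(G-a)+d(a)$ together with the inductive bound for $G-a$ and $d(a)\le\Delta$ gives $e(G)\le\Delta\,\beta(G)+\lfloor\Delta/2\rfloor\lfloor(\beta(G)-1)/m\rfloor\le\Delta\,\beta(G)+\lfloor\Delta/2\rfloor\lfloor\beta(G)/m\rfloor$. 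Finally, if $G$ is connected with $|V(G)|\le 2\beta(G)+1$, split on the size of $\beta(G)$: when $\beta(G)\ge m$ use $e(G)\le\lfloor\Delta|V(G)|/2\rfloor\le\Delta\,\beta(G)+\lfloor\Delta/2\rfloor\le\Delta\,\beta(G)+\lfloor\Delta/2\rfloor\lfloor\beta(G)/m\rfloor$; when $\beta(G)<m$, note $|V(G)|\le 2\beta(G)+1\le 2m-1\le\Delta$, hence $e(G)\le\binom{|V(G)|}{2}\le\binom{2\beta(G)+1}{2}=\beta(G)(2\beta(G)+1)\le\Delta\,\beta(G)$, and $\lfloor\beta(G)/m\rfloor=0$. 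This closes the induction.

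\emph{Where the difficulty lies.} The skeleton above is short; the labor is in the details. In the construction I must verify the odd-$\Delta$ graph $B$ carefully — it is $K_{\Delta+2}$ minus $\lceil(\Delta+2)/2\rceil$ carefully placed edges, and one has to check that what remains still carries a near-perfect matching and has maximum degree exactly $\Delta$. In the upper bound the delicate points are the parity case analysis for connected graphs with a near-perfect matching (where integrality of $e(G)$ and the inequality $2m-1\le\Delta$ are both essential to separate the regimes $\beta(G)\ge m$ and $\beta(G)<m$), and the correct use of the Gallai--Edmonds structure: the precise fact needed is Gallai's lemma, that a connected, non-factor-critical graph has a vertex contained in every maximum matching and that deleting such a vertex lowers the matching number by exactly one. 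These are standard, but they are the hinges of the argument.
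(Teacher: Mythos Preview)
The paper does not prove this theorem at all: Theorem~\ref{Chvatal76} is quoted from Chv\'atal and Hanson \cite{Chvatal76} and used purely as a black box (the only consequences the paper extracts are the special value $f(k-1,k-1)$ and the crude bound $f(k-1,n-1)\le kn$). So there is no ``paper's own proof'' to compare against --- you have supplied a full argument where the authors deliberately imported one.

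Your argument is correct. The inequality $f(\beta,\Delta)\le\Delta\beta+\beta$ does indeed follow from the exact formula via $\lfloor\Delta/2\rfloor\le\lceil\Delta/2\rceil$. For the exact formula, your construction checks out in both parities (the odd-$\Delta$ block $B$ on $\Delta+2$ vertices has minimum degree $\Delta-1$, which for $\Delta\ge3$ already forces a Hamiltonian path and hence a near-perfect matching; the case $\Delta=1$ you can verify directly). The upper bound induction is sound: the use of Gallai's lemma is exactly right (connected with deficiency $\ge2$ rules out factor-criticality, so some vertex is covered by every maximum matching and its deletion drops $\beta$ by one), and the two base cases $|V(G)|\le 2\beta(G)+1$ are handled cleanly by the degree-sum bound when $\beta(G)\ge m$ and by the clique bound when $\beta(G)<m$, with $2m-1\le\Delta$ doing the work in the latter. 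The superadditivity $\lfloor a/m\rfloor+\lfloor b/m\rfloor\le\lfloor(a+b)/m\rfloor$ is what makes the disconnected step go through.

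In short: your proof is a legitimate, self-contained replacement for the citation; the paper simply chose to quote rather than reprove.
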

We will frequently use the following special case proved by Abbott, Hanson, and Sauer \cite{Abbott72}:
$$f(k-1,k-1) = \left\{
  \begin{array}{ll}
   k^2-k \quad~~  \mbox{if $k$ is odd,} \\
    k^2-\frac32 k \quad  \mbox{if $k$ is even}.
  \end{array}
\right.$$
The extremal graphs are exactly those we embedded into the Tur\'{a}n graph $T_{n,2}$  to obtain the extremal $F_k$-free graph $G^i_{n,k}$ ($i=1,2$).

Essential to our proof  are  the following two lemmas: the triangle removal lemma and a stability result of F\"uredi.

\begin{lemma}[Triangle Removal Lemma \cite{Erdos95,fox2011,Ruzsa76}]\label{triangleremoval2}
For each $\varepsilon>0$, there exists an  $N=N(\varepsilon)$ and $\delta  > 0$ such that every graph $G$  on $n$ vertices with $n\geq N$ with at
most $\delta n^3$  triangles  can be made triangle-free by removing at most $\varepsilon n^2$ edges.
\end{lemma}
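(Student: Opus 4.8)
The plan is to derive the lemma from Szemer\'edi's Regularity Lemma together with the standard triangle counting (embedding) lemma for regular pairs. Fix $\varepsilon > 0$. First I would choose auxiliary parameters depending only on $\varepsilon$: a regularity parameter $\eta := \varepsilon/8$, a density threshold $d := \varepsilon/4$, and a minimum number of parts $m_0 := \lceil 4/\varepsilon \rceil$. Feeding $\eta$ and $m_0$ to the Regularity Lemma produces constants $M_0 = M_0(\varepsilon)$ and $N_0 = N_0(\varepsilon)$ such that every graph $G$ on $n \ge N_0$ vertices admits an $\eta$-regular equipartition $V(G) = V_0 \cup V_1 \cup \cdots \cup V_M$ with $m_0 \le M \le M_0$, exceptional part $|V_0| \le \eta n$, and equal classes $|V_1| = \cdots = |V_M| =: t \le n/M$.

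Next I would form a subgraph $G' \subseteq G$ by deleting every edge of the following kinds: edges incident to $V_0$; edges lying inside some class $V_i$ with $1 \le i \le M$; edges joining a pair $(V_i, V_j)$ that is not $\eta$-regular; and edges joining a pair $(V_i, V_j)$ of density less than $d$. Since an $\eta$-regular equipartition has at most $\eta M^2$ irregular pairs, the number of deleted edges is at most
\[
|V_0|\,n \;+\; M\binom{t}{2} \;+\; \eta M^2 t^2 \;+\; d\binom{M}{2}t^2 \;\le\; \Big(\eta + \tfrac{1}{M} + \eta + d\Big)n^2 \;\le\; \varepsilon n^2 ,
\]
using $Mt \le n$ together with the choices $\eta = \varepsilon/8$, $d = \varepsilon/4$, and $M \ge 4/\varepsilon$.

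The heart of the matter is to show that $G'$ is triangle-free once $G$ has few triangles. Suppose $G'$ contained a triangle $xyz$. Because all edges inside classes and all edges meeting $V_0$ were removed, $x, y, z$ lie in three distinct classes $V_i, V_j, V_k$, and by construction each of $(V_i,V_j)$, $(V_j,V_k)$, $(V_i,V_k)$ is $\eta$-regular of density at least $d = 2\eta$. The triangle counting lemma then yields at least $(1-2\eta)(d-\eta)^3 t^3 =: c\,t^3$ triangles in $G$ with one vertex in each of $V_i, V_j, V_k$, where $c = c(\varepsilon) > 0$. As $t \ge (1-\eta)n/M_0$, this forces $G$ to contain more than $c(1-\eta)^3 M_0^{-3} n^3$ triangles. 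Hence, setting $\delta := \tfrac{1}{2} c (1-\eta)^3 M_0^{-3} > 0$ (a positive constant depending only on $\varepsilon$) and taking $N = N(\varepsilon) \ge N_0$ large enough that the integrality of the part sizes and the bound $|V_0| \le \eta n$ make the preceding estimates valid, we reach a contradiction with the hypothesis that $G$ has at most $\delta n^3$ triangles. Therefore $G'$ is triangle-free, and it was obtained from $G$ by deleting at most $\varepsilon n^2$ edges, as claimed.

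The main obstacle is really the chain of quantifiers rather than any single estimate: $\eta$ and $d$ must be fixed from $\varepsilon$, then $\eta$ is fed to the Regularity Lemma to obtain the tower-type bounds $M_0(\varepsilon)$ and $N_0(\varepsilon)$, and only afterwards can $\delta$ be defined in terms of $M_0$, which is exactly why $\delta$ (and $N$) come out astronomically small in $\varepsilon$ — the same phenomenon the paper flags for its main theorem. The only ingredient beyond the Regularity Lemma is the triangle counting lemma for three pairwise $\eta$-regular pairs of density at least $2\eta$; I would either cite it or prove it in two lines by applying $\eta$-regularity first to discard the at most $2\eta\,|V_i|$ vertices of $V_i$ with abnormally small neighbourhoods in $V_j$ or in $V_k$, and then to the pair $(V_j,V_k)$ restricted to the (large) neighbourhoods of each remaining vertex.
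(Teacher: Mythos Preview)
Your proof is the standard Ruzsa--Szemer\'edi argument via the Regularity Lemma and the triangle counting lemma, and it is correct as written; the parameter choices and the cleaning bound are fine, and your remark about the order of quantifiers (fix $\eta,d$ from $\varepsilon$, obtain $M_0$ from regularity, then define $\delta$) is exactly the point.

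However, note that the paper does not prove this lemma at all: it is stated with citations to \cite{Erdos95,fox2011,Ruzsa76} and used as a black box. So there is no ``paper's own proof'' to compare against --- you have supplied a proof where the authors simply invoked the literature. What you have written is precisely the classical argument behind those citations, so in that sense your approach agrees with what the cited references do.
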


\begin{lemma}[F\"uredi \cite{Furedi2015}]  \label{furedi20153}Suppose that $K_{3}\nsubseteq G$, $|V(G)|=n$, $s>0$ and
$e(G) = e(T_{n,2}) -s.$
Then there exists a bipartite subgraph $H$, $E(H)\subseteq E(G)$ such that
$e(H) \ge e(G) - s.$
\end{lemma}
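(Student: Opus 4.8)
The plan is to prove something slightly stronger: that already among the cuts of the special shape $\bigl(N(r),\,V(G)\setminus N(r)\bigr)$, with $r$ ranging over the vertices of $G$, one of them leaves at most $s$ edges uncut. Fix $r\in V(G)$, write $Y_r=V(G)\setminus N(r)$, and let $H_r$ be the spanning bipartite subgraph of $G$ consisting of exactly the edges between $N(r)$ and $Y_r$. Since $G$ is triangle-free, $N(r)$ is an independent set, so every edge of $G$ lies either in $H_r$ or inside $Y_r$; hence $e(H_r)=e(G)-e(G[Y_r])$, and it suffices to find an $r$ with $e(G[Y_r])\le s$.

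The first step is to average $e(G[Y_r])$ as $r$ ranges uniformly over $V(G)$. An edge $xy\in E(G)$ has both endpoints in $Y_r$ precisely when $r\notin N(x)\cup N(y)$; and because $xy$ is an edge of the triangle-free graph $G$, the sets $N(x)$ and $N(y)$ are disjoint, so $|N(x)\cup N(y)|=d(x)+d(y)$ and this happens for a $\bigl(n-d(x)-d(y)\bigr)/n$ fraction of the vertices $r$. Summing over all edges and using $\sum_{xy\in E(G)}\bigl(d(x)+d(y)\bigr)=\sum_{v}d(v)^2$ (each vertex $v$ is counted $d(v)$ times) gives
\[
\mathbb{E}_{r}\bigl[e(G[Y_r])\bigr]=e(G)-\frac1n\sum_{v\in V(G)}d(v)^2 .
\]
By Cauchy--Schwarz, $\sum_{v}d(v)^2\ge (2e(G))^2/n$, so some vertex $r$ satisfies $e(G[Y_r])\le e(G)-4e(G)^2/n^2$.

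It remains to check $e(G)-4e(G)^2/n^2\le s$. Writing $m=e(G)$, the inequality $m-4m^2/n^2\le n^2/4-m$ becomes, after clearing denominators and rearranging, $0\le n^4-8mn^2+16m^2=(n^2-4m)^2$, which always holds; the hypothesis $s>0$ gives $m<\lfloor n^2/4\rfloor$, which is all that is needed to keep the quantities in range. Thus $e(G[Y_r])$ is an integer that is at most $n^2/4-m$, and therefore at most $\lfloor n^2/4\rfloor-m=s$ (for odd $n$ the quantity $n^2-4m$ is odd, so $n^2/4-m$ is not an integer and the rounding goes through). Taking $H:=H_r$ yields a bipartite subgraph of $G$ with $E(H)\subseteq E(G)$ and $e(H)=e(G)-e(G[Y_r])\ge e(G)-s$, which is the assertion.

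This argument has no real obstacle: it is a one-line first-moment computation followed by Cauchy--Schwarz. The one point that needs care is the rounding at the very end --- comparing the integer $e(G[Y_r])$ with the integer $\lfloor n^2/4\rfloor-e(G)$ rather than with $n^2/4-e(G)$ --- and this is precisely where, for instance, $C_5$ (which has $s=1$ and $e(G[Y_r])=1$ for every $r$) shows that the conclusion $e(H)\ge e(G)-s$ cannot be improved.
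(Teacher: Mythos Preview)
The paper does not give a proof of this lemma; it is quoted from F\"uredi's paper \cite{Furedi2015} and used as a black box. Your proposal is correct, and in fact it is essentially F\"uredi's original argument: averaging $e(G[V\setminus N(r)])$ over $r$, using triangle-freeness to get $|N(x)\cup N(y)|=d(x)+d(y)$ for every edge $xy$, and finishing with Cauchy--Schwarz and the identity $(n^2-4m)^2\ge 0$. The rounding step for odd $n$ is handled correctly, and the $C_5$ example is a nice check that the bound is tight.
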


The following lemma is needed in the sequel.

\begin{lemma}For any integer $n\geq 2$, we have
$$\frac{n}{2}-\sqrt{\left\lceil \frac{n}{2}  \right \rceil  \left\lfloor \frac{n}{2} \right \rfloor  }<\frac{1}{n}.$$
\end{lemma}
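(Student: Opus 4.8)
The plan is to remove the square root by rationalizing, after which everything is elementary. First I would record the identity $\left\lceil \frac n2\right\rceil\left\lfloor\frac n2\right\rfloor = \left\lfloor\frac{n^2}{4}\right\rfloor$. Consequently
$$\left(\frac n2\right)^2 - \left\lceil\frac n2\right\rceil\left\lfloor\frac n2\right\rfloor \;=\; \frac{n^2}{4} - \left\lfloor\frac{n^2}{4}\right\rfloor ,$$
which equals $0$ when $n$ is even and $\tfrac14$ when $n$ is odd; in either case it is at most $\tfrac14$.

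Next, since $\frac n2 + \sqrt{\lceil n/2\rceil\lfloor n/2\rfloor} > 0$ for all $n\ge 2$, I would multiply and divide by this conjugate, using the bound just obtained on the numerator and discarding the nonnegative square root appearing in the denominator:
$$\frac n2 - \sqrt{\left\lceil\frac n2\right\rceil\left\lfloor\frac n2\right\rfloor} \;=\; \frac{\left(\frac n2\right)^2 - \left\lceil\frac n2\right\rceil\left\lfloor\frac n2\right\rfloor}{\frac n2 + \sqrt{\left\lceil\frac n2\right\rceil\left\lfloor\frac n2\right\rfloor}} \;\le\; \frac{1/4}{\frac n2 + \sqrt{\left\lceil\frac n2\right\rceil\left\lfloor\frac n2\right\rfloor}} \;\le\; \frac{1/4}{n/2} \;=\; \frac{1}{2n} \;<\; \frac1n .$$
Chaining these inequalities (the final step being strict) gives the claim.

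I do not expect any genuine obstacle here: the single observation needed is that rationalization turns $\frac n2 - \sqrt{\lceil n/2\rceil\lfloor n/2\rfloor}$ into a fraction whose numerator is the fractional part of $n^2/4$, which is trivially bounded by $\tfrac14$, while the denominator is at least $n/2$. If one prefers to avoid the floor identity, the statement also follows by a two-case split: for even $n$ the left-hand side is literally $0<\frac1n$, and for odd $n=2m+1$ one writes $\sqrt{m(m+1)}=\sqrt{(n/2)^2-\tfrac14}$ and applies exactly the same conjugate computation.
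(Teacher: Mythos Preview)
Your argument is correct. The paper actually states this lemma without proof, presumably because it is elementary, so there is no ``paper's own proof'' to compare against directly. Your rationalization approach is precisely the technique the authors employ later in the paper (in the proof that the bipartition is balanced, where they compute $\frac{n}{2}-\sqrt{\frac{n^2}{4}-1}=\frac{1}{\frac{n}{2}+\sqrt{\frac{n^2}{4}-1}}$ and similarly for the odd case), so your method is entirely in the spirit of the paper and would serve as the missing proof with no changes needed.
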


\section{The Proof of  Theorem~\ref{MainTH}}

Let $\mathcal{G}_{n,k}$ be the set of all $F_k$-free graphs of order $n$.
Let $G \in \mathcal{G}_{n,k}$ be a graph on $n$ vertices with maximum spectral radius. The aim of this section is to prove that $e(G) = \mathrm{ex}(n, F_k)$ for $n$ large enough.

 Let $\lambda_1$ be the spectral radius of $G$ and let $\mathbf{x}$ be a positive eigenvector for it. We  may normalize $\mathbf{x}$ so that it has maximum entry equal to $1$, and let $z$ be a vertex such that $\mathbf{x}_z = 1$. We prove the theorem iteratively, giving successively better lower bounds on both $e(G)$ and the eigenvector entries of all of the other vertices, until finally we can show that $e(G) = \mathrm{ex}(n, F_k)$.

 Let $H \in \mathrm{Ex}(n, F_k)$. Then since $G$ is the graph maximizing the spectral radius over all $F_k$-free graphs, in view of Theorem \ref{Erdos95},
 we must have
\begin{equation}\label{first lower bound1}
\lambda_1(G) \geq \lambda_1(H) \geq \frac{\mathbf{1}^T A(H) \mathbf{1}}{\mathbf{1}^T\mathbf{1}}
= 2\frac{  \left\lceil \frac{n}{2}  \right \rceil  \left\lfloor \frac{n}{2} \right \rfloor  + f(k-1, k-1)}{n} >\frac{n}{2}  .
\end{equation}

The proof of Theorem~\ref{MainTH}  is outlined as follows.

\begin{itemize}
\item We give a lower bound on $e(G)$ as a function of $\lambda_1$ and the number of triangles in $G$, which on first approximation gives a bound of roughly $\frac{n^2}{4} - O(kn)$.
\item Using the triangle removal lemma and F\"uredi's stability result, we show that $G$ has a very large maximum cut.
\item We show that no vertex has many neighbors on its side of the partition, and then we refine this by considering eigenvector entries to show that in fact no vertex has more than a constant number of neighbors on its side of the partition.
\item We show that no vertices have degree much smaller than $\frac{n}{2}$, and this allows us to refine our lower bound on both $e(G)$ and on the eigenvector entry of each vertex.
\item Once we know that all vertices have eigenvector entry very close to $1$, we may show that the partition is balanced. This shows that $G$ can be converted to a graph in $\mathrm{Ex}(n, F_k)$ by adding or removing a constant number of edges, and this allows us to show that $e(G) = \mathrm{ex}(n, F_k)$.
\end{itemize}

We now proceed with the details.  First  we prove a lemma which gives a lower bound on $e(G)$ in terms of $\lambda_1$ and the number of triangles in $G$.

\begin{lemma}\label{lower bound with triangles2}
If  $G$ has $t$ triangles, then
\[
e(G) \geq \lambda_1^2 - \frac{3t}{\lambda_1}.
\]
\end{lemma}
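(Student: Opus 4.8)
The plan is to bound the number of walks of length two in $G$ from below using $\lambda_1$, and from above using $e(G)$ together with the triangle count. Recall that for the adjacency matrix $A = A(G)$, the number of closed walks of length $2$ is $\tr(A^2) = \sum_i d(v_i) = 2e(G)$, and the number of (ordered) walks of length $3$ is $\mathbf{1}^T A^3 \mathbf{1}$, but more useful here is $\tr(A^3) = 6t$ since each triangle contributes six closed walks. The key inequality I would use is a Rayleigh-type estimate: since $\mathbf{x}$ is the positive unit eigenvector (rescaled) with $A\mathbf{x} = \lambda_1 \mathbf{x}$, we have control over sums of the form $\mathbf{x}^T A^j \mathbf{x} = \lambda_1^j \|\mathbf{x}\|^2$.

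Concretely, first I would observe $\lambda_1^3 \|\mathbf{x}\|^2 = \mathbf{x}^T A^3 \mathbf{x}$, and try to estimate $\mathbf{x}^T A^3 \mathbf{x}$ from above. Write $\mathbf{x}^T A^3 \mathbf{x} = \sum_{i \sim j} (A^2)_{ij} \mathbf{x}_i \mathbf{x}_j + \text{(diagonal-ish terms)}$; the entry $(A^2)_{ij}$ for adjacent $i,j$ counts common neighbours, i.e. triangles through the edge $ij$. Summing $(A^2)_{ij}$ over edges $ij$ gives $3t$. The cleaner route, which I expect the authors take, avoids eigenvector weights entirely: use $\lambda_1^2 \le \frac{\mathbf{x}^T A^2 \mathbf{x}}{\|\mathbf{x}\|^2}$ is an equality, so instead interpret $\lambda_1^2$ as bounded by an averaged quantity. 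The slickest version: for the vertex $z$ with $\mathbf{x}_z = 1$ of maximum entry, $(A^2\mathbf{x})_z = \lambda_1^2 \mathbf{x}_z = \lambda_1^2$. Now $(A^2 \mathbf{x})_z = \sum_{j} (A^2)_{zj}\mathbf{x}_j \le \sum_j (A^2)_{zj} = (\text{number of walks of length } 2 \text{ from } z)$. We have $(A^2)_{zz} = d(z)$, and $\sum_{j \neq z}(A^2)_{zj}$ counts walks $z \to i \to j$ with $j \neq z$. Split these by whether $j \sim z$ or not: the ones with $j \sim z$ correspond (with multiplicity) to triangles through $z$, and there are at most... this is where one needs the global structure, so I would instead sum over all vertices.

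The approach I would actually carry out: from $A^2\mathbf{x} = \lambda_1^2 \mathbf{x}$, take the inner product with $\mathbf{1}$ is awkward because of $\|\mathbf{x}\|_1$. Better: use $\lambda_1^2 = (A^2\mathbf{x})_z \le \sum_{j}(A^2)_{zj}$ as above, and bound $\sum_j (A^2)_{zj} = \sum_{i \sim z} d(i)$. Hmm — that reintroduces degrees. The honestly cleanest derivation of exactly the stated inequality $e(G) \ge \lambda_1^2 - 3t/\lambda_1$ is: $\lambda_1^3 = \lambda_1 \cdot \lambda_1^2$, and $\lambda_1^3\|\mathbf{x}\|^2 = \mathbf{x}^TA^3\mathbf{x} = \sum_{i}\mathbf{x}_i (A^3\mathbf{x})_i$; meanwhile $\mathbf{x}^T A^3 \mathbf{x} = \sum_{\text{walks } u\text{-}v\text{-}w\text{-}t} \mathbf{x}_u \mathbf{x}_t$. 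Decompose a walk $u\text{-}v\text{-}w\text{-}t$: if $u = w$ or $v = t$ it is "degenerate" and contributes to a term comparable to $\sum d(v)\mathbf{x}_v^2 \le \lambda_1 \sum \mathbf{x}_v^2$ roughly (using $(A^2)_{vv}=d(v)$ and... ), and the nondegenerate closed ones ($u=t$, $v\neq$ anything forced) are exactly $6t$ walks, each contributing $\mathbf{x}_u^2 \le \|\mathbf{x}\|^2$ — no, we want to subtract. Let me state the intended chain: $\lambda_1^2\|\mathbf{x}\|^2 = \mathbf{x}^TA^2\mathbf{x} = \sum_v d(v)\mathbf{x}_v^2 + \sum_{u \neq w,\, u\sim v \sim w}\mathbf{x}_u\mathbf{x}_w$. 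Then $\lambda_1^3 \|\mathbf{x}\|^2 = \mathbf{x}^T A^3 \mathbf{x} \ge$ contributions only from edges — and the walk count of length $3$ closed at a vertex relates $3t$ to the deficiency. Rather than belabor the bookkeeping, the structural point is: $e(G) = \tfrac12\tr A^2$ and $3t = \tfrac12 \tr A^3$... no, $6t = \tr A^3$, so $3t = \tfrac12\tr A^3$.

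The main obstacle I anticipate is getting the clean coefficient exactly right, i.e. justifying that the "wasted" walks of length two (those $u\text{-}v\text{-}w$ with $u \sim w$, which would-be extend to triangles) account for precisely a $3t/\lambda_1$-sized correction and no more. I would handle this by the following compact argument: normalize $\mathbf{x}$ with $\|\mathbf{x}\|_2 = 1$ (not max-entry $1$ — but the lemma statement is norm-free, so WLOG $\|\mathbf{x}\|_2 = 1$). Then $\lambda_1^2 = \mathbf{x}^T A^2 \mathbf{x} = \sum_{v} d(v)\mathbf{x}_v^2 + 2\sum_{\{u,w\}:\, u\neq w}|\{v: u\sim v\sim w\}|\mathbf{x}_u\mathbf{x}_w$. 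The first sum is $\le \max_v d(v) \le \lambda_1 \cdot(\ldots)$ — no. Use instead: $\lambda_1^2 = \sum_v d(v)\mathbf{x}_v^2 + (\text{paths of length 2 term})$; bound $\sum_v d(v)\mathbf{x}_v^2 \le \sum_{uv\in E}(\mathbf{x}_u^2+\mathbf{x}_v^2)$; and for the path term, pairs $\{u,w\}$ with $u\sim w$ contribute, over all such pairs and their common neighbours, exactly $3t$ triples, and pairs with $u\not\sim w$ contribute the rest. The cleanest finish is actually via $\lambda_1^4$: $\lambda_1^4 = \|A^2\mathbf{x}\|^2 \le$ ... I will present it as: from $A\mathbf{x}=\lambda_1\mathbf{x}$, $\lambda_1^2 = \mathbf{x}^TA^2\mathbf{x}$; write $A^2 = D + B$ where $D = \mathrm{diag}(d(v))$ and $B$ has $B_{uw} = $ number of common neighbours; then $\mathbf{x}^T D\mathbf{x} \le e(G)\cdot\|\mathbf{x}\|_\infty^2$-type bounds fail, so instead bound $\mathbf{x}^T D \mathbf{x} = \sum_{uv\in E}(\mathbf{x}_u^2+\mathbf{x}_v^2) \ge 2\sum_{uv\in E}\mathbf{x}_u\mathbf{x}_w$... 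Given the delicacy, in the actual write-up I would derive it from $\tr(A^3) = 6t$ and $\tr(A^2)=2e(G)$ combined with the spectral bound $\lambda_1^3 \le \tr(A^3)\cdot$(something) — but since other eigenvalues can be negative this needs care, confirming that the eigenvector route is the safer one, and that pinning down the exact constant $3$ in $3t/\lambda_1$ is the crux of the argument.
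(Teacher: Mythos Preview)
Your proposal is not a proof: it is a survey of several possible starts, each abandoned before it produces the inequality. You explicitly acknowledge this yourself (``Rather than belabor the bookkeeping\ldots'', ``Given the delicacy, in the actual write-up I would\ldots'', ``pinning down the exact constant $3$ in $3t/\lambda_1$ is the crux''). None of the routes you sketch --- the single-vertex bound $(A^2\mathbf{x})_z \le \sum_j (A^2)_{zj}$, the $\ell^2$-normalized decomposition $\lambda_1^2 = \mathbf{x}^T A^2 \mathbf{x} = \mathbf{x}^T D \mathbf{x} + \mathbf{x}^T B \mathbf{x}$, or the trace identities $\tr A^2 = 2e(G)$, $\tr A^3 = 6t$ --- is carried to a conclusion, and in fact each of them runs into real trouble: the first only sees triangles through $z$, not all $t$ triangles; the second requires you to compare $\sum d(v)\mathbf{x}_v^2$ to $e(G)$ with no slack to spare; the third fails because $\lambda_1^3$ is not controlled by $\tr A^3$ when there are negative eigenvalues, as you note.

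The missing idea, which the paper uses, is this: keep the max-entry normalization $\mathbf{x}_z = 1$, but instead of looking only at $z$, sum the eigenvector equation over \emph{all} vertices to obtain
\[
\lambda_1^2 \sum_{u\in V} \mathbf{x}_u \;=\; \sum_{u\in V}\sum_{v\sim u}\sum_{w\sim v} \mathbf{x}_w,
\]
a sum over ordered walks $u\text{--}v\text{--}w$ weighted by $\mathbf{x}_w$ at the terminal vertex only. Now reorganize the right side by the \emph{edge} $\{u,v\}$: each edge contributes $\sum_{w\sim u}\mathbf{x}_w + \sum_{w\sim v}\mathbf{x}_w$, and splitting each $w$ according to whether it is adjacent to both endpoints (a triangle through $uv$) or not yields at most $3t + e(G)\sum_{w}\mathbf{x}_w$, using $\mathbf{x}_w \le 1$ only on the triangle terms. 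Dividing by $\sum_w \mathbf{x}_w$ and then invoking $\lambda_1 = \lambda_1 \mathbf{x}_z = \sum_{w\sim z}\mathbf{x}_w \le \sum_{w}\mathbf{x}_w$ gives exactly $e(G) \ge \lambda_1^2 - 3t/\lambda_1$. The two features you never combined are (i) weighting the walk asymmetrically (only at its end) and (ii) summing over all starting vertices so that the common factor $\sum_w \mathbf{x}_w$ cancels.
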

\begin{proof}
Let $\lambda_1$ be the spectral radius of $G$ and let $\mathbf{x}$ be a positive eigenvector scaled such that it has maximum entry equal to $1$, and let $z$ be a vertex with maximum eigenvector entry i.e., $\mathbf{x}_z = 1$.
Then
$\lambda_1x_u=\sum_{v\sim u}x_v$ and
$\lambda_1^2x_u=\sum_{v\sim u}\lambda_1x_v=\sum_{v\sim u}\sum_{w\sim v}x_w.$
We consider the following triple sum:
\[
\sum_{u\in V}\lambda_1^2x_u
 =\sum_{u\in V} \sum_{v\sim u}\sum_{w\sim v}x_w.
 \]
 The sum counts over all ordered walks on three vertices (with possible repetition), and adds the eigenvector entry of the last vertex. Instead of summing over ordered triples of vertices, we count by considering the first edge in the walk. If a given walk has first edge $uv$, then $\mathbf{x}_w$ will be counted by this edge exactly once if $w$ is adjacent to exactly one of $u$ or $v$ and exactly twice if $\{u,v,w\}$ forms a triangle. Therefore, the sum is equal to
 \begin{eqnarray*}
\sum_{u\sim v}\left( 2\sum_{\substack{w\sim u\\ w\sim v}} \mathbf{x}_w + \sum_{\substack{w\sim u\\ w\not\sim v}} \mathbf{x}_w + \sum_{\substack{w\sim v\\w\not\sim u}} \mathbf{x}_w\right)&=&\sum_{u\sim v}\left (\sum_{\substack{w\sim u\\ w\sim v}}x_w+\sum_{\substack{w\sim u\\ w\not\sim v}}x_w+\sum_{w\sim v}x_w \right )\\
&\le& \sum_{u\sim v} \left(\sum_{\substack{w\sim u\\ w\sim v}}x_w+\sum_{ w\not\sim v}x_w+\sum_{w\sim v}x_w  \right)\\
&\le &  \sum_{u\sim v} \left(\sum_{\substack{w\sim u\\ w\sim v}}x_w+\sum_{w\in V}x_w \right)\\
&\le &  \sum_{u\sim v}  \left(\sum_{\substack{w\sim u\\ w\sim v}}1+\sum_{w\in V}x_w \right)\\
&=& 3t+e(G)\sum_{w\in V}x_w.
\end{eqnarray*}
Hence
$$e(G)\ge \lambda_1^2-\frac{3t}{\sum_{w\in V}x_w}.$$
On the other hand,
$$\lambda_1=\lambda_1x_z=\sum_{w\sim z}x_w\le \sum_{w\in V}x_w.$$
Therefore
$$e(G)\ge \lambda_1^2-\frac{3t}{\sum_{w\in V}x_w}\ge \lambda_1^2-\frac{3t}{\lambda_1}.$$
So the assertion holds.\end{proof}

\begin{corollary}\label{edgelower2}
If  the  number of   triangles of $G$ is $t$, then
\[
e(G) \geq \lambda_1^2 - \frac{6t}{n }.
\]
\end{corollary}
\begin{proof}
In view of inequality  (\ref{first lower bound1}),  and the function $f(x)=  \lambda_1^2-\frac{3t}{x}$ is strictly increasing with respect to $x$, the assertion follows.
\end{proof}

\begin{lemma}\label{matchedge}
Suppose the matching number of a graph   $H$ of order $n$  is at most $k-1$. Then $e(H)\le kn$, i.e., $\mathrm{ex}(n, M_k)\le kn$, where $M_k$ is  a matching of size $k$.
\end{lemma}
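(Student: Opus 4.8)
The statement asserts that if a graph $H$ on $n$ vertices has matching number $\beta(H)\le k-1$, then $e(H)\le kn$. The natural tool here is the Gallai--Edmonds structure theorem, or more simply the classical Erd\H{o}s--Gallai theorem on the maximum number of edges in a graph with bounded matching number, which gives $\mathrm{ex}(n,M_k)=\max\{\binom{2k-1}{2},\ \binom{k-1}{2}+(k-1)(n-k+1)\}$ exactly. Since the paper only needs the crude bound $kn$, I would avoid invoking the full tight result and instead give a short self-contained argument.

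The cleanest approach: let $M$ be a maximum matching in $H$, so $|M|=\beta(H)=\beta\le k-1$, and let $S$ be the set of $2\beta$ vertices saturated by $M$. By maximality of $M$, the set $V(H)\setminus S$ is an independent set (any edge among unsaturated vertices would extend $M$). Hence every edge of $H$ has at least one endpoint in $S$. Therefore
\[
e(H)\le \binom{2\beta}{2}+2\beta(n-2\beta)\le \binom{2\beta}{2}+2\beta n.
\]
Now I would bound this in terms of $k$. Since $\beta\le k-1$, we have $\binom{2\beta}{2}\le \binom{2k-2}{2}=(k-1)(2k-3)$, and $2\beta\le 2k-2$. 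So $e(H)\le (k-1)(2k-3)+(2k-2)n$. This is roughly $2kn$, not $kn$, so the crude "every edge hits $S$" bound is a factor of two too weak, and I need a sharper count of edges incident to $S$.

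The fix is to count more carefully using the matching edges themselves. Write $V\setminus S=:T$, an independent set. Edges of $H$ are of three types: edges inside $S$, and edges between $S$ and $T$. For each matching edge $e=uv\in M$, consider the contribution of the pair $\{u,v\}$: I claim that the number of edges of $H$ incident to $\{u,v\}$ (counting the edge $uv$ once) is at most $n$. Indeed, if both $u$ and $v$ had neighbors in $T$, say $u\sim u'$ and $v\sim v'$ with $u',v'\in T$ distinct, then replacing $uv$ by $uu'$ and $vv'$ would enlarge $M$ (using that $T$ is independent so $u'\ne v'$ forces $u'v'\notin M$ and both are unsaturated) — contradiction. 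So for each matching edge at most one of its two endpoints sends edges into $T$; hence the edges incident to $\{u,v\}$ number at most $(n-2\beta)+\bigl(|S|-1\bigr)+1 \le n$ (one endpoint contributes up to $n-2\beta$ edges to $T$, both endpoints contribute up to $|S|-1=2\beta-1$ edges inside $S$, plus the edge $uv$ itself; and $(n-2\beta)+(2\beta-1)+1=n$). Summing over the $\beta\le k-1$ matching edges, and noting every edge of $H$ is incident to $V(e)$ for at least one $e\in M$, gives $e(H)\le \beta n\le (k-1)n\le kn$. I would write this union-bound/double-counting step carefully since it is where the factor of two is recovered. The main obstacle is exactly this: getting the constant down to $k$ rather than $2k$, which forces the slightly delicate augmenting-path argument above rather than the one-line "vertex cover of size $2\beta$" bound. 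Everything else is routine.
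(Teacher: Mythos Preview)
Your overall strategy is sound and, once corrected, gives a self-contained proof. But there is a genuine error in your edge count for a fixed matching edge $uv$. You claim that the number of edges of $H$ incident to $\{u,v\}$ is at most $n$, arguing that ``both endpoints contribute up to $|S|-1=2\beta-1$ edges inside $S$.'' This is false: nothing prevents \emph{each} of $u,v$ from being complete to $S$, so the number of edges inside $S$ meeting $\{u,v\}$ can be as large as $2(2\beta-2)+1=4\beta-3$. Concretely, take $H=K_{2k-1}$ (which has matching number $k-1$): here $S=V(H)$, $T=\emptyset$, and every matching pair $\{u,v\}$ meets $4k-5$ edges, which exceeds $n=2k-1$ once $k\ge 3$. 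Your union bound $\sum_{e\in M}(\text{edges meeting }V(e))$ is then $(k-1)(4k-5)$, which is larger than $kn=k(2k-1)$ for $k\ge 4$, so the argument as written does not close.

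The fix is simple: do not try to assign edges inside $S$ to matching pairs. Your augmenting-path observation correctly shows that for each matching edge $uv$ the number of $\{u,v\}$--$T$ edges is at most $\max(n-2\beta,2)$, hence $e(S,T)\le \beta(n-2\beta)$ when $n\ge 2\beta+2$. Combine this with the trivial $e(S)\le\binom{2\beta}{2}=\beta(2\beta-1)$ to get
\[
e(H)=e(S)+e(S,T)\le \beta(2\beta-1)+\beta(n-2\beta)=\beta(n-1)\le (k-1)n<kn.
\]
The remaining case $n\le 2\beta+1\le 2k-1$ is immediate since then $e(H)\le\binom{n}{2}\le n(k-1)<kn$.

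For comparison, the paper does not argue from scratch at all: it simply applies the Chv\'atal--Hanson theorem (stated earlier in the paper) with $\Delta=n-1$, obtaining $e(H)\le f(k-1,n-1)\le (k-1)(n-1)+(k-1)=(k-1)n<kn$ in one line. Your approach has the virtue of being self-contained, but given that Chv\'atal--Hanson is already on the table, the paper's route is shorter.
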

\begin{proof}
By Theorem \ref{Chvatal76}, $e(H)\le f(k-1, n-1)\le (k-1)(n-1+1)< kn$.
\end{proof}

\begin{lemma}\label{maxcut}
Let $\varepsilon$ and $\delta$ be fixed positive constants with $\delta<\frac{1}{4k}$, $\varepsilon<\frac{\delta^2}{3}$. There exists an $N(\varepsilon, \delta, k)$ such that $G$ has a partition $V=S\cup T$ which gives a  maximum cut,  and
$$e(S, T)\ge \left  (\frac{1}{4}-\varepsilon \right)n^2$$
for $n\ge N(\varepsilon, \delta, k)$. Furthermore
\[
\left(\frac{1}{2} - \sqrt{\varepsilon} \right) n \leq |S|, |T| \leq \left(\frac{1}{2} + \sqrt{\varepsilon}\right) n.
\]
\end{lemma}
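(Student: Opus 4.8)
The goal is to show that the maximum-spectral-radius graph $G$ has a max-cut $V=S\cup T$ with $e(S,T)\ge(\tfrac14-\varepsilon)n^2$ and $|S|,|T|$ within $\sqrt{\varepsilon}n$ of $n/2$. The strategy is: first use the spectral lower bound together with the triangle removal lemma to show $G$ has few edges inside any small set, then apply F\"uredi's stability result to upgrade "few triangles" into "few edges to delete to reach bipartite," and finally observe that a max-cut must be at least as good as this bipartite subgraph.

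First I would bound the number of triangles in $G$. Since $G$ is $F_k$-free, no vertex lies in $k$ edge-disjoint triangles through it; in particular no vertex is the center of a $k$-fan, so the triangles through any fixed vertex $v$ form a graph on $N(v)$ with matching number at most $k-1$, whence by Lemma~\ref{matchedge} at most $k\cdot d(v)\le kn$ triangles pass through $v$. Summing over $v$ and dividing by $3$ gives $t\le \tfrac{k}{3}n^2$. Combined with Corollary~\ref{edgelower2} and $\lambda_1>n/2$ this already yields $e(G)\ge \tfrac{n^2}{4}-2kn$, so $G$ is edge-dense. More importantly, $t\le \tfrac{k}{3}n^2$ means $t\le \delta n^3$ once $n$ is large (here is where $n\ge N(\varepsilon,\delta,k)$ enters), so the Triangle Removal Lemma (Lemma~\ref{triangleremoval2}) applies: we can delete at most $\varepsilon' n^2$ edges from $G$ to obtain a triangle-free graph $G'$, for a suitably small $\varepsilon'$ depending on $\varepsilon$.

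Next I would quantify how close $G'$ is to a complete balanced bipartite graph and then pass to a max-cut of $G$. We have $e(G')\ge e(G)-\varepsilon' n^2\ge \tfrac{n^2}{4}-2kn-\varepsilon' n^2$, so $e(G') = e(T_{n,2})-s$ with $s\le (\varepsilon'+o(1))n^2$. Applying F\"uredi's Lemma~\ref{furedi20153} to the triangle-free graph $G'$ produces a bipartite subgraph $H\subseteq G'\subseteq G$ with $e(H)\ge e(G')-s\ge \tfrac{n^2}{4}-O(\varepsilon' n^2)$. The bipartition of $H$ is then a cut of $G$ of size at least $\tfrac{n^2}{4}-O(\varepsilon' n^2)$, so the \emph{maximum} cut $e(S,T)$ is at least this large; choosing $\varepsilon'$ small enough in terms of $\varepsilon$ (tracing through the dependence $\varepsilon<\delta^2/3$, $\delta<1/(4k)$) gives $e(S,T)\ge(\tfrac14-\varepsilon)n^2$. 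Finally, the size bounds on $|S|$ and $|T|$ follow by a convexity/counting argument: $e(S,T)\le|S||T|$, and if $|S|=(\tfrac12+\eta)n$ then $|S||T|\le(\tfrac14-\eta^2)n^2$, so $(\tfrac14-\eta^2)n^2\ge(\tfrac14-\varepsilon)n^2$ forces $\eta^2\le\varepsilon$, i.e. $\eta\le\sqrt{\varepsilon}$.

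**Main obstacle.** The genuinely delicate point is the bookkeeping of the various small constants: the $\varepsilon'$ fed into the Triangle Removal Lemma, the $\delta$ controlling $t\le\delta n^3$, and the final target $\varepsilon$ must be chosen in a consistent order (first fix $\varepsilon$, then $\delta<1/(4k)$ with $\varepsilon<\delta^2/3$, then let the removal lemma hand back its own $\delta$-threshold and $N$), and one must make sure the lower-order $O(kn)$ loss from $e(G)\ge\lambda_1^2-6t/n$ is truly negligible against $\varepsilon n^2$ for large $n$. The triangle-counting step is the conceptual heart — recognizing that $F_k$-freeness limits triangles through each vertex via the Chv\'atal--Hanson/Abbott--Hanson--Sauer bound on $\mathrm{ex}(n,M_k)$ — but once that is in hand, everything else is an assembly of the two quoted lemmas plus elementary estimates.
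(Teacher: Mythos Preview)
Your proposal is correct and follows essentially the same route as the paper: bound triangles via the matching number of each $G[N(v)]$ (using Lemma~\ref{matchedge}) to get $t\le\tfrac{k}{3}n^2$ and hence $e(G)\ge\tfrac{n^2}{4}-2kn$, apply the Triangle Removal Lemma with a small enough parameter to obtain a triangle-free $G'$, then use F\"uredi's stability (Lemma~\ref{furedi20153}) to extract a large bipartite subgraph whose bipartition lower-bounds the max-cut, and finally derive the size bounds on $|S|,|T|$ from $e(S,T)\le |S||T|$. The paper's only cosmetic difference is that it fixes the removal-lemma parameter concretely as $\tfrac{1}{10}\varepsilon$ and writes the F\"uredi step as $e(G_2)\ge 2e(G_1)-e(T_{n,2})$, but the logic and all key ingredients are the same as yours.
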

\begin{proof}
Since $G$ is $F_k$-free, the neighborhood of any vertex does not have $M_k$ (a matching of size $k$) as a subgraph. Thus
by Lemma~\ref{matchedge}, we can obtain the following upper bound for the number of triangles,
\[3t = \sum_{v\in V(G)} e(G[N(v)]) \leq \sum_{v\in V(G)} \mathrm{ex}(d(v)+1, M_k) \leq \sum_{v\in V(G)} \mathrm{ex}(n, M_k) \leq \sum_{v\in V(G)} kn=kn^2.
\]
This gives $t \leq \frac{kn^2}{3} $.
So  $t \leq  \frac{k}{3n}n^3\le \delta n^3$ for $n\ge  N_2\ge \frac{k}{3\delta}$.
From Lemma \ref{lower bound with triangles2}, we obtain
  \begin{equation}\label{maxsize}
e(G) \geq \frac{n^2}{4}-2kn.
  \end{equation}
    By Lemma \ref{triangleremoval2}, there exists an $N_1(\varepsilon,  k)$ such that  the graph $G_1$ obtained from $G$ by deleting
       at most $\frac{1}{10}\varepsilon n^2$ edges is $K_3$-free. For  $N=\max\{N_1, N_2\}$, the size of the   graph $G_1$ of order $n\ge N$ satisfies
    $e(G_1)\ge e(G)-\frac{1}{10}\varepsilon n^2$.
      Note that  $e(G_1)\le   e(T_{n,2})$ by T{u}r\'an's Theorem.  Define
     $$s\triangleq e(T_{n,2})-e(G_1)\ge 0.$$
    By Lemma \ref{furedi20153}, $G_1$ contains a bipartite subgraph $G_2$ such that $e(G_2)\ge e(G_1)-s$.
    Hence,    for $n$ sufficiently large, we have
    \begin{eqnarray*}
    e(G_2) &\ge & e(G_1)-s\\
       &=& 2e(G_1)-e(T_{n,2})\\
    &\ge & 2e(G)-e(T_{n,2})-\frac{1}{4}\varepsilon n^2\\
     &\ge& 2 \left(\frac{n^2}{4}-2kn \right)- \frac{n^2}{4}-\frac{1}{5}\varepsilon n^2\\
    &\ge &\left (\frac{1}{4}-\varepsilon \right) n^2.
    \end{eqnarray*}
        Therefore,   $G$ has  a partition  $V=S\cup T$ which gives a  maximum cut  such that
     \begin{equation}\label{maxcut1}
     e(S,T)\ge e(G_2)\ge \left  (\frac{1}{4}-\varepsilon \right) n^2.
    \end{equation}
    Furthermore, without loss of generality, we may assume that  $|S|\le |T|$. If $|S|<(\frac{1}{2}-\sqrt{\varepsilon})n$, then $|T|=n-|S|>
    (\frac{1}{2}+\sqrt{\varepsilon})n$. So
    $$e(S,T)\le |S||T| <  \left  (\frac{1}{2}-\sqrt{\varepsilon} \right)n  \left (\frac{1}{2}+\sqrt{\varepsilon} \right)n=\left (\frac{1}{4}-\varepsilon \right)n^2,$$
    which contradicts to  Eq (\ref{maxcut1}).
    Therefore it follows that
    $$\left (\frac{1}{2}-\sqrt{\varepsilon} \right)n\le |S|, |T|\le \left  (\frac{1}{2}+\sqrt{\varepsilon} \right)n.$$
    Hence the assertion holds.
        \end{proof}

\begin{lemma}\label{Lupper} Let $k\geq 2$.
Denote by
\[
L:= \left\{v: d(v) \leq \left(\frac{1}{2}-\frac{1}{4(k+1)}\right) n\right\}.
\]
Then
$$|L|\le 16k^2.$$
\end{lemma}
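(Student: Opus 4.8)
The plan is to bound $|L|$ in two stages: first I would establish a crude bound $|L| < \tfrac{n}{4(k+1)}$ directly from the max-cut structure of Lemma~\ref{maxcut}, and then bootstrap to the constant bound by deleting $L$ from $G$ and applying the extremal number of Theorem~\ref{Erdos95} to what remains.

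For the crude step, fix $\varepsilon$ small in terms of $k$ — say $\varepsilon < \tfrac{1}{64(k+1)^2}$, which is permitted since Lemma~\ref{maxcut} only requires $\varepsilon < \delta^2/3$ and $\delta < 1/(4k)$ — and let $V = S \cup T$ be the max cut it produces, so $e(S,T) \ge (\tfrac14 - \varepsilon)n^2$ and $\tfrac n2 - \sqrt\varepsilon\, n \le |S|,|T| \le \tfrac n2 + \sqrt\varepsilon\, n$. If $v \in L$ lies in $S$, the number of its non-neighbours in $T$ is at least $|T| - d(v) \ge \big(\tfrac{1}{4(k+1)} - \sqrt\varepsilon\big)n \ge \tfrac{n}{8(k+1)}$. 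Summing this over $v \in L \cap S$ and using $\sum_{v \in S}\big(|T| - |N(v)\cap T|\big) = |S||T| - e(S,T) \le \lfloor n^2/4\rfloor - (\tfrac14-\varepsilon)n^2 \le \varepsilon n^2$ gives $|L\cap S| \le 8(k+1)\varepsilon n$, and symmetrically for $T$, so $|L| \le 16(k+1)\varepsilon n < \tfrac{n}{4(k+1)}$.

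For the bootstrap, observe that $G - L$ is $F_k$-free, so by Theorem~\ref{Erdos95} (for $n$ large), $e(G-L) \le \mathrm{ex}(n - |L|, F_k) = \big\lfloor (n-|L|)^2/4 \big\rfloor + f(k-1,k-1)$, while the number of edges of $G$ incident to $L$ is at most $\sum_{v \in L} d(v) \le |L|\big(\tfrac12 - \tfrac{1}{4(k+1)}\big)n$ by the definition of $L$. Combining these with the lower bound $e(G) \ge \tfrac{n^2}{4} - 2kn$ from (\ref{maxsize}) and writing $\ell = |L|$,
\[
\frac{n^2}{4} - 2kn \;\le\; \frac{(n-\ell)^2}{4} + f(k-1,k-1) + \ell\Big(\tfrac12 - \tfrac{1}{4(k+1)}\Big)n \;=\; \frac{n^2}{4} + \frac{\ell^2}{4} + f(k-1,k-1) - \frac{\ell n}{4(k+1)},
\]
which rearranges to $\tfrac{\ell}{4}\big(\tfrac{n}{k+1} - \ell\big) \le 2kn + f(k-1,k-1)$. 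The crude step forces $\ell < \tfrac{n}{4(k+1)}$, hence $\tfrac{n}{k+1} - \ell > \tfrac{3n}{4(k+1)}$, and therefore $\ell < \tfrac{16(k+1)}{3n}\big(2kn + f(k-1,k-1)\big) = \tfrac{32k(k+1)}{3} + o(1) \le 16k^2$ for $k \ge 2$ and $n$ large, as required.

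The step I expect to be the main obstacle is the second one: comparing $e(S,T)$ against the trivial bound $|S||T| \le n^2/4$ only ever yields a bound linear in $n$, because the $O(\varepsilon n^2)$ slack in the max cut dwarfs the per-vertex deficit $\tfrac{n}{4(k+1)}$. Deleting $L$ and measuring against $\mathrm{ex}(n-|L|, F_k)$ is what repairs this: removing $\ell$ vertices costs the extremal count roughly $\tfrac{n\ell}{2}$ edges while the vertices of $L$ can return at most $\tfrac{n\ell}{2} - \tfrac{n\ell}{4(k+1)}$, so the deficit $\tfrac{n\ell}{4(k+1)}$ must be absorbed by the mere $O(kn)$ gap between $e(G)$ and $n^2/4$, which forces $\ell = O(k^2)$. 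A further subtlety is that the inequality $\tfrac{\ell}{4}\big(\tfrac{n}{k+1} - \ell\big) \le 2kn + f(k-1,k-1)$ is also satisfied when $\ell$ is close to $\tfrac{n}{k+1}$, so the crude linear bound is genuinely needed first to confine $\ell$ to the small branch; and $\varepsilon$ — hence the threshold for $n$ — must be chosen depending on $k$ from the outset.
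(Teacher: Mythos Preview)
Your proof is correct, but the paper's argument is shorter and avoids your two-stage structure entirely. Both proofs rest on the same idea --- delete low-degree vertices and apply Theorem~\ref{Erdos95} to the remainder --- but the paper sidesteps the quadratic-in-$\ell$ issue with a small trick: instead of deleting all of $L$ and having to control the term $\ell^2/4$ (which is what forces you to first prove $\ell < \tfrac{n}{4(k+1)}$ to pin down the correct branch), the paper assumes for contradiction that $|L| > 16k^2$, picks a subset $L' \subseteq L$ of size \emph{exactly} $16k^2$, and checks directly that
\[
e(G - L') \;\ge\; e(G) - 16k^2\Big(\tfrac12 - \tfrac{1}{4(k+1)}\Big)n \;\ge\; \frac{n^2}{4} - 2kn - 16k^2\Big(\tfrac12 - \tfrac{1}{4(k+1)}\Big)n \;>\; \frac{(n - 16k^2)^2}{4} + k^2
\]
for large $n$, contradicting Theorem~\ref{Erdos95}. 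Fixing $|L'|$ to a constant turns your inequality into a single linear comparison in $n$, so no crude bound or appeal to the max-cut lemma is needed. Your route has the virtue of making the arithmetic transparent --- one sees exactly why $16k^2$ is the right constant --- but the paper's version is a one-paragraph proof.
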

\begin{proof}
 Suppose that $|L| >16k^2$.
 Then let $L^{\prime}\subseteq L$ with $|L^{\prime}|=16k^2$.
 Then  it follows that
   \begin{eqnarray*}
e(G-L^{\prime}) &\ge& e(G)-\sum_{v\in L^{\prime}}d(v)\\
&\ge& \frac{n^2}{4}-2kn-16k^2 \left(\frac{1}{2}-\frac{1}{4(k+1)} \right)n\\
&>& \frac{(n-16k^2)^2}{4}+k^2.
     \end{eqnarray*}
 for $n$  a sufficiently large constant depending only on $k$, where the first inequality is by \eqref{maxsize}. Hence by Theorem~\ref{Erdos95}, $G-L^{\prime}$ contains $F_k$, which implies that $G$ contains $F_k$.
 So the assertion holds.
 \end{proof}

We will also need the following lemma which can be proved by induction or double counting.

\begin{lemma}\label{set}
Let $A_1, \cdots, A_p$ be $p$  finite sets. Then
\begin{equation}\label{set1}
|A_1\cap A_2\cap\cdots\cap A_p|\ge \sum_{i=1}^p|A_i|-(p-1)\left|\bigcup_{i=1}^pA_i\right|.
\end{equation}
\end{lemma}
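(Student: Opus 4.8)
\textbf{Proof plan for Lemma~\ref{set}.}

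The plan is to prove the inequality by induction on $p$, the number of sets, using the case $p=2$ as the base and the standard inclusion-exclusion bound $|A\cap B| \ge |A|+|B|-|A\cup B|$ (which is an equality, since $|A\cup B| = |A|+|B|-|A\cap B|$) as the engine of the inductive step.

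For the base case $p=2$, the claimed inequality reads $|A_1\cap A_2|\ge |A_1|+|A_2|-|A_1\cup A_2|$, which holds with equality by the two-set inclusion-exclusion identity. For the inductive step, assume the bound holds for $p-1$ sets. Write $B = A_1\cap\cdots\cap A_{p-1}$ and apply the two-set case to $B$ and $A_p$:
\[
|A_1\cap\cdots\cap A_p| = |B\cap A_p| \ge |B| + |A_p| - |B\cup A_p|.
\]
Now I would bound the three terms on the right. By the inductive hypothesis, $|B| \ge \sum_{i=1}^{p-1}|A_i| - (p-2)\bigl|\bigcup_{i=1}^{p-1}A_i\bigr|$. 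Since $\bigcup_{i=1}^{p-1}A_i \subseteq \bigcup_{i=1}^{p}A_i$, we have $-(p-2)\bigl|\bigcup_{i=1}^{p-1}A_i\bigr| \ge -(p-2)\bigl|\bigcup_{i=1}^{p}A_i\bigr|$, so $|B| \ge \sum_{i=1}^{p-1}|A_i| - (p-2)\bigl|\bigcup_{i=1}^{p}A_i\bigr|$. Also $B\cup A_p \subseteq \bigcup_{i=1}^{p}A_i$, hence $-|B\cup A_p| \ge -\bigl|\bigcup_{i=1}^{p}A_i\bigr|$. Substituting these into the displayed inequality gives
\[
|A_1\cap\cdots\cap A_p| \ge \sum_{i=1}^{p-1}|A_i| - (p-2)\Bigl|\bigcup_{i=1}^{p}A_i\Bigr| + |A_p| - \Bigl|\bigcup_{i=1}^{p}A_i\Bigr| = \sum_{i=1}^{p}|A_i| - (p-1)\Bigl|\bigcup_{i=1}^{p}A_i\Bigr|,
\]
which is exactly the claim.

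There is really no main obstacle here: the only thing to be careful about is the monotonicity substitution, namely that enlarging the union on the right-hand side only decreases $-(p-2)\bigl|\bigcup\bigr|$, which is valid precisely because the coefficient $p-2$ is nonnegative for $p\ge 2$. An equally clean alternative would be the double-counting argument: for each element $x\in\bigcup_{i=1}^p A_i$, let $m(x)$ be the number of sets $A_i$ containing $x$; then $\sum_i |A_i| = \sum_x m(x)$ and $|A_1\cap\cdots\cap A_p| = \#\{x: m(x)=p\}$, so $\sum_i|A_i| - |A_1\cap\cdots\cap A_p| = \sum_x (m(x) - [m(x)=p]) \le \sum_x (p-1) = (p-1)\bigl|\bigcup_i A_i\bigr|$, using that $m(x)\le p$ always and $m(x) - [m(x)=p] \le p-1$ for every $x$. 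Either route is short; I would present the induction for definiteness.
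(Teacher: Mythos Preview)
Your proof is correct, and it matches the paper's own treatment exactly: the paper does not spell out a proof of Lemma~\ref{set} but simply remarks that it ``can be proved by induction or double counting,'' which are precisely the two arguments you supply. Both your induction and your double-counting sketch are valid as written.
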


For a vertex $v$, let $d_S(v) = |N(v) \cap S|$ and $d_T(v) = |N(v) \cap T|$, and let
\[
W: = \left\{ v\in S: d_S(v) \geq \delta n\right\} \cup \left\{v \in T: d_T(v) \geq \delta n\right\}
\]
be the set of vertices that have many neighbors which are not in the cut. Let $L$ be as in Lemma \ref{Lupper}, that is
\[
L= \left\{v: d(v) \leq \left(\frac{1}{2}-\frac{1}{4(k+1)}\right) n\right\}.
\]

Next we show that actually $W$ and $L$ are empty.

\begin{lemma}\label{W-Lenpty}
For the above $W$, we have
$$|W| <\frac{2\varepsilon}{\delta} n+\frac{2k^2}{\delta n},$$
and
$W \setminus L$ is empty.
\end{lemma}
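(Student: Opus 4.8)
The plan is to first bound $|W|$ by a counting argument using the maximality of the cut $e(S,T)$, and then to show $W \setminus L = \emptyset$ via a local switching/eigenvector argument combined with the fact that $G$ is $F_k$-free. For the size bound, observe that every vertex $v \in W$ has at least $\delta n$ neighbors on its own side of the partition. If such a vertex were moved to the other side, it would gain at least $\delta n$ cut-edges and lose $d_T(v)$ (resp. $d_S(v)$) cut-edges; since the cut is maximum, this forces $d(v) \geq 2\delta n - (\text{something})$, but more usefully, summing the ``own-side degree'' over $W$, the total number of edges inside $S$ plus inside $T$ is at least $\frac12 |W| \delta n$ minus lower-order terms. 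On the other hand, from $e(G) \geq \frac{n^2}{4} - 2kn$ (inequality \eqref{maxsize}) and $e(S,T) \leq \lfloor n^2/4 \rfloor$, the number of edges \emph{not} in the cut is at most $2kn + O(1)$; more precisely one compares $e(G) = e(S,T) + e(G[S]) + e(G[T])$ against $e(S,T) \le \lceil n/2\rceil\lfloor n/2\rfloor$ and uses \eqref{maxsize} together with the earlier lower bound $e(S,T) \ge (\frac14 - \varepsilon)n^2$ to get $e(G[S]) + e(G[T]) \le \varepsilon n^2 + O(kn)$. Combining $\frac{\delta n}{2}|W| \le e(G[S]) + e(G[T]) \le \varepsilon n^2 + k^2$ (after absorbing the linear term appropriately — this is where the precise constants $2\varepsilon/\delta$ and $2k^2/(\delta n)$ come from) yields $|W| < \frac{2\varepsilon}{\delta}n + \frac{2k^2}{\delta n}$.

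For the second part, suppose for contradiction there is a vertex $v \in W \setminus L$, say $v \in S$, so $d_S(v) \geq \delta n$ and $d(v) > (\frac12 - \frac{1}{4(k+1)})n$, hence also $d_T(v) = d(v) - d_S(v)$ is large. I would aim to find $k$ triangles through a common vertex. The neighborhood $N(v)$ contains at least $\delta n$ vertices in $S$ and many in $T$. Since $v$'s own-side neighbors $N_S(v)$ together with a large chunk of $T$ form a nearly complete bipartite pattern (as $G$ has almost all of the $S$–$T$ edges), one expects to find a large matching among edges between $N_S(v)$ and $N_T(v)$: each such edge, together with $v$, is a triangle, and $k$ disjoint such edges give an $F_k$ centered at $v$. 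To make this rigorous I would bound the number of edges between $N_S(v)$ and $N_T(v)$ from below — it is at least $|N_S(v)| \cdot |N_T(v)|$ minus the number of non-edges, and non-edges between $S$ and $T$ are at most $e(T_{n,2}) - e(S,T) \le \varepsilon n^2$ plus corrections — and then invoke König/defect-Hall (or simply Lemma \ref{matchedge}: a graph on this vertex set with no matching of size $k$ has at most $k \cdot (\text{order})$ edges, far fewer than what we have) to extract a matching of size $k$. This produces $F_k \subseteq G$, a contradiction, so $W \setminus L$ is empty.

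The main obstacle is the bookkeeping in the second part: one must ensure the lower bound on the number of $N_S(v)$–$N_T(v)$ edges genuinely exceeds $f(k-1,k-1)$-type thresholds (i.e. exceeds roughly $k$ times the number of vertices involved), which requires that $\delta n$ and the surplus from $d(v) > (\frac12 - \frac{1}{4(k+1)})n$ beat the $\varepsilon n^2$ budget of missing cut-edges; this is exactly why the hypotheses $\delta < \frac{1}{4k}$ and $\varepsilon < \delta^2/3$ are in force, and the constants must be tracked carefully. The size bound on $|W|$, by contrast, is a routine max-cut edge-count once \eqref{maxsize} and Lemma \ref{maxcut} are in hand.
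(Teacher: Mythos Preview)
Your first part is essentially the paper's argument: from $e(G)\le \lfloor n^2/4\rfloor + k^2$ and $e(S,T)\ge (\tfrac14-\varepsilon)n^2$ one gets $e(G[S])+e(G[T])\le \varepsilon n^2+k^2$, while $\sum_{v\in W}$ of the own-side degree gives $e(G[S])+e(G[T])\ge \tfrac12|W|\delta n$; combining yields the bound. (Your detour through ``move $v$ to the other side'' is unnecessary here---the switching/max-cut inequality is not used for the size bound, only the edge count.)

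Your second part is correct but takes a different route from the paper. You argue: for $v\in W\setminus L$ with $v\in S$, the bipartite graph between $N_S(v)$ and $N_T(v)$ has at least $|N_S(v)|\,|N_T(v)|-\varepsilon n^2$ edges (since at most $\varepsilon n^2$ $S$--$T$ pairs are non-edges), and by max-cut $d_T(v)\ge d_S(v)\ge \delta n$, so this count is at least $(\delta^2-\varepsilon)n^2>\tfrac{2}{3}\delta^2 n^2$, which for large $n$ exceeds $k\cdot(|N_S(v)|+|N_T(v)|)$; hence by Lemma~\ref{matchedge} there is an $M_k$ between $N_S(v)$ and $N_T(v)$, and these $k$ edges together with $v$ form an $F_k$. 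The paper instead argues structurally: it first observes $|S\setminus(W\cup L)|\ge k$, then shows that if $v$ had $k$ neighbours $u_1,\dots,u_k$ in $S\setminus(W\cup L)$, the common-neighbourhood bound (Lemma~\ref{set}) applied to $N_T(v),N_T(u_1),\dots,N_T(u_k)$ produces $k$ common neighbours in $T$ and hence an $F_k$; so in fact $v$ has at most $k-1$ neighbours in $S\setminus(W\cup L)$, forcing $d_S(v)\le |W|+|L|+(k-1)<\delta n$, contradicting $v\in W$. Your approach is shorter and avoids Lemma~\ref{set} entirely; the paper's approach has the advantage that it already isolates the key mechanism (vertices outside $W\cup L$ have large $T$-degree and hence large pairwise common $T$-neighbourhoods) that is reused verbatim in the proof of Lemma~\ref{Lempty}. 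One small point to tighten in your write-up: the clause ``hence also $d_T(v)=d(v)-d_S(v)$ is large'' does not follow from $d(v)$ large and $d_S(v)\ge\delta n$ alone---you must invoke the max-cut inequality $d_T(v)\ge d_S(v)$ explicitly at that step.
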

\begin{proof}
 Since $e(S,T) \geq \left(\frac{1}{4} - \varepsilon \right)n^2$ and $e(G)\le \mathrm{ex} (n, F_k)\le \frac{n^2}{4}+k^2$ by Theorem \ref{Erdos95}.
 Hence
 \begin{equation}\label{WL-1}
 e(S)+e(T)=e(G)-e(S, T)\le \frac{n^2}{4}+k^2- \left(\frac{1}{4} - \varepsilon \right)n^2=\varepsilon n^2+k^2.
 \end{equation}
 On the other hand, if we
 let $W_1=W\cap S$ and $W_2=W\cap T$, then we deduce
 $$2e(S) =\sum_{u\in S}d_{S}(u) \ge  \sum_{u\in W_1}d_S(u)\ge |W_1|\delta n, \ \  2e(T) = \sum_{u\in T}d_{T}(u)\ge  \sum_{u\in W_2}d_T(u)\ge |W_2|\delta n.$$
  So
  \begin{equation}\label{WL-2}
  e(S)+e(T)\ge (|W_1|+|W_2|)\frac{\delta n}{2}=\frac{|W|\delta n}{2}.
  \end{equation}
  By \eqref{WL-1} and \eqref{WL-2}, we get
  $$\frac{|W|\delta n}{2}\le \varepsilon n^2+k^2,$$
  i.e.,
  \begin{equation}\label{W upper bound}|W|\le \frac{2(\varepsilon n^2+k^2)}{\delta n}.\end{equation}

Suppose that $W\setminus L \neq\emptyset$.  We now prove that this is impossible.

   Let $L_1=L\cap S$ and $L_2=L\cap T$. Without loss of generality, there exists a vertex $u\in W_1\setminus L_1.$ Since  $S$ and $T$ form a maximum cut, $d_T(u)\ge \frac{1}{2}d(u)$. On the other hand, $u\not\in L$ because $u\in W_1\setminus{L_1}$. Therefore $d(u)\ge \left(\frac{1}{2}-\frac{1}{4(k+1)} \right)n$. So
   $$d_T(u)\ge \frac{1}{2}d(u)\ge \left(\frac{1}{4}-\frac{1}{8(k+1)} \right)n.$$
  On the other hand,  $|L|\le 16k^2$.
  Hence, for fixed $\delta<\frac{1}{2k}$,  $\varepsilon<\frac{\delta^2}{2}$ and sufficiently large $n$, we have
  $$
  |S\setminus (W\cup L)|\ge \left(\frac{1}{2}- \sqrt{\varepsilon } \right)n-\delta n-\frac{2k^2}{\delta n}-16k^2\ge \left (\frac{1}{2}-\sqrt{\varepsilon } -\delta \right)n-18k^2\ge k.
  $$

   Suppose that $u$ is adjacent to   $k$ vertices  $u_1, \ldots, u_k$ in $S\setminus (W\cup L)$.  Since $u_i\not\in L$,  we have $d(u_i)\ge \left(\frac{1}{2}-\frac{1}{4(k+1)}\right )n$. On the other hand, $d_S(u_i)\le \delta n$ by $u_i\notin W$. So
  $
  d_T(u_i)=d(u_i)-d_S(u_i)\ge \left(\frac{1}{2}-\frac{1}{4(k+1)}-\delta \right)n
  $.
   By Lemma~\ref{set}, we have
   \begin{eqnarray*}
&& |N_T(u)\cap N_T(u_1)\cap \cdots \cap N_T(u_k)|\\
 &\ge&
    |N_T(u)|+|N_T(u_1)|+\ldots+|N_T(u_k)|-k |N_T(u)\cup N_T(u_1)\cup\cdots\cup N_T(u_k)|\\
 &\ge&  d_T(u)+ d_T(u_1)+\cdots +d_T(u_k) -k|T| \\
& \ge &\left (\frac{1}{4}-\frac{1}{8(k+1)}\right)n+\left(\frac{1}{2}-\frac{1}{4(k+1)}-\delta\right)n\cdot k-k \left(\frac{1}{2}+\sqrt{\varepsilon}\right)n\\
 &= &\left(\frac{1}{8(k+1)}-k\delta- k \sqrt{\varepsilon}\right)n>k
\end{eqnarray*}
for sufficiently large $n$. So there exist $k$  vertices $v_1, \ldots v_k$  in $T$ such that the subgraph induced by vertex set $\{u_1, \ldots, u_k, v_1, \ldots, v_k\}$ is complete bipartite.
It follows that $G$ contains $F_k$,
this is a contradiction.
Therefore $u$ is adjacent to at most $k-1$ vertices in  $S\setminus (W\cup L)$.
Hence, in  view of    $\varepsilon<\frac{\delta^2}{3}$, we have
   \begin{eqnarray*}
   d_S(u)&\le & |W|+|L|+k-1\\
   &<&  \frac{2\varepsilon}{\delta} n+\frac{2k^2}{\delta n}+16k^2+k-1\\
&<& \frac{2\delta}{3}n+\frac{2k^2}{\delta n}+17k^2\\
&<& \delta n
\end{eqnarray*}
for sufficiently large $n$.  This is a contradiction to the fact that $u\in W$.
Similarly, there  is no vertex  $u\in W_2\setminus L_2$,  Hence $W\setminus L=\emptyset.$
 \end{proof}

\begin{lemma}\label{Lempty}
$L$ is empty.
\end{lemma}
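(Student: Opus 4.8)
The plan is to show that any vertex in $L$ would either force an $F_k$ or could be deleted and re-added in a way that strictly increases the spectral radius, contradicting the extremality of $G$. Since Lemma~\ref{W-Lenpty} already tells us $W\setminus L=\emptyset$, every vertex of $W$ lies in $L$, and $|L|\le 16k^2$ by Lemma~\ref{Lupper}. The key structural fact I would extract first: for every vertex $v\notin L$ we have $d(v)\ge\bigl(\tfrac12-\tfrac1{4(k+1)}\bigr)n$, and since $W\setminus L=\emptyset$, such a vertex has $d_S(v)<\delta n$ (if $v\in S$) or $d_T(v)<\delta n$ (if $v\in T$); combined with the max-cut property $d_T(v)\ge\frac12 d(v)$, this pins down $d_T(v)$ (resp. $d_S(v)$) to within $O(\delta n)$ of $n/2$ for all but the $O(k^2)$ vertices of $L$. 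In particular, for any $k$ vertices outside $L\cup W$ on one side, the intersection of their neighborhoods on the other side has size $\bigl(\tfrac12-O(\delta)\bigr)n - (k-1)\cdot O(\sqrt\varepsilon\, n)$, which is $\gg k$; so one cannot have $k$ vertices on the same side sharing a common neighbor $u$ on that side without creating $K_{k,k}$ between those $k$ vertices and $k$ common neighbors on the other side, i.e. an $F_k$. This is exactly the argument used in Lemma~\ref{W-Lenpty}, and it will be reused here to control any high-degree behavior of a putative $L$-vertex.

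Now suppose $v\in L$. I would first bound the eigenvector entry $\mathbf{x}_v$ from above: from $\lambda_1\mathbf{x}_v=\sum_{w\sim v}\mathbf{x}_w\le d(v)\le\bigl(\tfrac12-\tfrac1{4(k+1)}\bigr)n$ and $\lambda_1>\tfrac n2$ (inequality~\eqref{first lower bound1}), we get $\mathbf{x}_v\le 1-\tfrac1{2(k+1)}+o(1)$, bounded away from $1$. Next I would compare $G$ to the graph $G'$ obtained by deleting all edges at $v$ and joining $v$ to the vertex $z$ (with $\mathbf{x}_z=1$) together with a suitable large set of vertices on the side opposite to $z$. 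Concretely, since $z\notin L$ (as $\mathbf{x}_z=1$ forces $d(z)\ge\lambda_1>\tfrac n2$), $z$ lies in $S$ or $T$, say $z\in S$; then all but $O(k^2)$ vertices of $T$ are adjacent to $z$ and have eigenvector entry close to $1$ as well (this "everyone has entry near $1$" statement I would derive by the same two-step argument: $\lambda_1^2\mathbf{x}_z\le$ walk count, forcing most entries near $1$ — or simply cite that it will be made precise in the subsequent lemmas; for the purposes of this lemma a crude bound suffices). One must check $G'$ is still $F_k$-free: $v$ now has all its neighbors on one side, so $G'[N(v)]$ is edge-less in the bipartition up to the $O(k^2)$ exceptional vertices, and any fan through $v$ would need a triangle at $v$, i.e. an edge inside $N(v)$; choosing $N_{G'}(v)$ inside $T\setminus(L\cup W)$ kills all such edges. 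A fan not through $v$ lives in $G-v\subseteq G$, which is $F_k$-free.

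Finally I would run the eigenvalue swap: by the Rayleigh quotient,
\[
\lambda_1(G') - \lambda_1(G)\ \ge\ \frac{2\mathbf{x}_v\Bigl(\sum_{w\in N_{G'}(v)}\mathbf{x}_w-\sum_{w\in N_G(v)}\mathbf{x}_w\Bigr)}{\mathbf{x}^T\mathbf{x}}.
\]
We have $\sum_{w\in N_G(v)}\mathbf{x}_w=\lambda_1\mathbf{x}_v$, while $\sum_{w\in N_{G'}(v)}\mathbf{x}_w\ge (1-o(1))|N_{G'}(v)|$ since we chose the new neighbors to have entries near $1$; taking $|N_{G'}(v)|=\lceil\lambda_1\rceil+1>\lambda_1\ge\lambda_1\mathbf{x}_v$ makes the bracket strictly positive, hence $\lambda_1(G')>\lambda_1(G)$, contradicting maximality. (If $\mathbf{x}_v=0$ the argument is even easier: adding any edge at $v$ to a near-$1$ vertex already increases $\lambda_1$.) The main obstacle is the bookkeeping needed to guarantee that the new neighbors of $v$ can be chosen with eigenvector entries close enough to $1$ and in sufficient quantity while keeping $G'$ genuinely $F_k$-free — this requires the near-$1$ eigenvector estimate for vertices outside $L$, which in turn leans on the degree control $d(v)\approx n/2$ for $v\notin L$ established via Lemmas~\ref{maxcut}, \ref{Lupper} and \ref{W-Lenpty}. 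Once that is in hand, the swap is routine.
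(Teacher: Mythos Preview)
Your overall strategy---delete the edges at a putative $v\in L$ and reconnect $v$ to a large set on one side, then use the Rayleigh quotient to get a strictly larger spectral radius---is exactly the paper's approach.  The difference is in how one certifies that the new neighborhood carries enough eigenvector weight, and this is where your write-up has a real gap.

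You claim that the new neighbors of $v$ can be chosen with eigenvector entries close to~$1$, and you propose either a walk-count argument from $\lambda_1^2\mathbf{x}_z$ or citing ``subsequent lemmas.''  Neither works here.  The later eigenvector estimate (Lemma~\ref{second lower bound evector2}) is proved only \emph{after} $L=\emptyset$ is known, so invoking it is circular; and a walk-count inequality gives upper bounds on sums, not pointwise lower bounds on $\mathbf{x}_w$.  At this stage you have no control over individual entries of $\mathbf{x}$ outside $z$.  The paper sidesteps this entirely: it first extracts a large independent set $I_T\subseteq T$ (by showing $G[T\setminus L]$ is $K_{1,k}$- and $M_k$-free, hence has at most $f(k-1,k-1)$ edges), and then bounds the \emph{sum} $\sum_{w\in I_T}\mathbf{x}_w$ directly from the eigenvector equation at $z$.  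Since $z\notin L\cup W$, one gets $d_S(z)\le 17k^2$, so
\[
\lambda_1=\sum_{w\sim z}\mathbf{x}_w\le d_S(z)+|T\setminus I_T|+\sum_{w\in I_T}\mathbf{x}_w\le 35k^2+\sum_{w\in I_T}\mathbf{x}_w,
\]
whence $\sum_{w\in I_T}\mathbf{x}_w\ge\lambda_1-35k^2$.  This is the missing ingredient: you never need any entry to be near~$1$, only the aggregate over $I_T$ to be near $\lambda_1$.

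A second, smaller issue: you assert that choosing $N_{G'}(v)\subseteq T\setminus(L\cup W)$ ``kills all such edges'' inside $N_{G'}(v)$.  That is false---$T\setminus(L\cup W)$ need not be independent; you only know its induced subgraph has bounded matching and bounded maximum degree.  One can still argue $F_k$-freeness (no $M_k$ in $N_{G'}(v)$, and any fan centered elsewhere using $v$ forces an edge in $I_T$), but the paper's choice of an \emph{independent} set $I_T$ makes the verification trivial: no triangle whatsoever passes through $v$ in $G^+$.  With these two fixes---use $I_T$ and bound the sum via $z$ rather than individual entries---your sketch becomes the paper's proof.
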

\begin{proof} We will prove the result from the following two claims.

  {\bf Claim 1}: There exist  independent sets
 $I_S\subseteq S$  and $I_T\subseteq T$ such that
  $$|I_S|\ge  |S|-18k^2, \quad
 \mbox {and} \quad
  |I_T|\ge |T|-18k^2.
 $$

Indeed, let  $u_1, \ldots, u_{2k} \in S\setminus L$. Then $u_i\notin L$ which implies
$$
d(u_i)\ge \left(\frac{1}{2}-\frac{1}{4(k+1)} \right)n.
$$
By Lemma~\ref{W-Lenpty}, $d_S(u_i)\le \delta n$. Hence
$$d_T(u_i)=d(u_i)-d_S(u_i)\ge \left(\frac{1}{2}-\frac{1}{4(k+1)}-\delta\right)n.$$

 Furthermore by Lemma~\ref{set}, we  have
    \begin{eqnarray*}
\left |\bigcap_{i=1}^{2k}N_T(u_i) \right| &\ge &  \sum_{i=1}^{2k}|N_T(u_i)|-(2k-1) \left|\bigcup_{i=1}^{2k}N_T(u_i) \right| \\
& \ge& \left (\frac{1}{2}-\frac{1}{4(k+1)}-\delta \right)n\cdot 2k-(2k-1)(\frac{1}{2}+\sqrt{\varepsilon})n \\
 &=&\left(\frac{1}{2(k+1)}-2k\delta-(2k-1)\sqrt{\varepsilon} \right)n\\
 &>&k
\end{eqnarray*}
for sufficiently  large $n$.
Hence there exist $k$ vertices $v_1, \ldots, v_k$ such that the induced subgraph by two partitions $\{u_1, \ldots, u_{2k}\}$ and $\{v_1, \ldots, v_k\}$ is a complete bipartite graph.  So $G[S\setminus L]$ are both $K_{1, k}$ and $M_k$-free, otherwise $G$ contains $F_k$, i.e., $uu_1v_1, \ldots uu_kv_k$ for $d(u)\ge k$, or $v_1u_1u_2, \ldots, v_1u_{2k-1}u_{2k}$
 for $\{u_1u_2, \ldots u_{2k-1}u_{2k}\}$  being a matching of size $k$.
Hence the maximum degree and the maximum matching number of $G[S\setminus L]$  are at most  $k-1$, respectively. By Theorem \ref{Chvatal76},
$$e(G[S\setminus L])\le f(k-1, k-1).$$
 The same argument gives
 $$e(G[T\setminus L])\le f(k-1, k-1).$$
 Since $G[S \setminus L]$ has at most $f(k-1, k-1)$ edges, then  the subgraph obtained from $G[S \setminus L]$ by deleting one vertex of each edge in $G[S\setminus L]$ contains no edges, which is an independent set of $G[S\setminus L]$. So there exists an independent set
 $I_S\subseteq S$ such that
  $$
  |I_S|\ge |S\setminus L|-f(k-1, k-1)\ge |S|-k(k-\frac32)-16k^2\ge |S|-18k^2.
  $$
 The same argument gives that there  is an independent set $I_T\subseteq T$ with
 $$ |I_T|\ge |T|-18k^2.
 $$
 So Claim 1 holds.

 Recall that $z$ is a vertex with maximum eigenvector entry. Since  $x_z=1$, and
 $$d(z)\ge \sum_{w\sim z}x_w=\lambda_1 x_z=\lambda_1\ge   \frac{n}{2},$$
 which implies $d(z)\ge \frac{n}{2}$.
 Hence $z\notin L$.

  Without loss of generality, we may assume that $z\in S$.
Since the maximum degree in the induced subgraph $G[S\setminus L]$  is at most $k-1$ (containing no $K_{1,k}$), from Lemma~\ref{Lupper}, we have $|L|\le 16k^2$ and
 $$
 d_S(z)=d_{S\cap L}(z)+d_{S\setminus L}(z)\le k-1+16k^2\le 17k^2.
 $$
 Therefore,  by Claim 1, we have
 \begin{eqnarray*}
 \lambda_1&=&\lambda_1x_z=\sum_{v\sim z}x_v\\
 &=& \sum_{v\sim z, v\in S}x_v+\sum_{v\sim z , v\in T}x_v\\
 &=& \sum_{v\sim z, v\in S}x_v+  \sum_{v\sim z, v\in I_T}x_v+\sum_{v\sim z , v\in T\setminus I_T}x_v\\
 &\le&  d_S(z)+\sum_{ v\in I_T}x_v+\sum_{ v\in T\setminus I_T}1\\
 &\le & 17k^2+\sum_{ v\in I_T}x_v+|T|-|I_T|\\
 &\le & \sum_{ v\in I_T}x_v  +17k^2+18k^2\\
&\le & \sum_{ v\in I_T}x_v  +35k^2.
\end{eqnarray*}
 So \begin{equation}\label{Lempty2}
 \sum_{ v\in I_T}x_v\ge \lambda_1-35k^2.
 \end{equation}

  {\bf Claim 2}:  $L=\emptyset$.

By way of contradiction, assume that there is a vertex  $v\in L$, i.e., $d(v)\le (\frac{1}{2}-\frac{1}{4(k+1)})n$.
Consider the graph $G^+$ with vertex set $V(G)$ and edge set $E(G^+) = E(G \setminus \{v\}) \cup \{vw: w\in I_T\}$. Note that adding a vertex incident with vertices in $I_T$ does not create any triangles, and so $G^+$ is $F_k$-free. By (\ref{Lempty2}),  we have that
\begin{align*}
\lambda_1(G^+) - \lambda_1(G) &\geq \frac{\mathbf{x}^T\left(A(G^+) - A(G)\right) \mathbf{x}}{\mathbf{x^T}\mathbf{x}} = \frac{2\mathbf{x}_v}{\mathbf{x}^T\mathbf{x}}\left( \sum_{w\in I_T} \mathbf{x}_w - \sum_{uv\in E(G)} \mathbf{x}_u\right) \\
& \geq \frac{2 \mathbf{x}_v}{\mathbf{x}^T\mathbf{x}} \left( \lambda_1 - 35k^2 - d_G(v)\right)\\
&\geq \frac{2 \mathbf{x}_v}{\mathbf{x}^T\mathbf{x}} \left( \lambda_1 - 35k^2 - \left (\frac{1}{2}-\frac{1}{4(k+1)} \right)n\right) \\
&\geq \frac{2 \mathbf{x}_v}{\mathbf{x}^T\mathbf{x}} \left(\frac{n}{2} -35k^2 -\left (\frac{1}{2}-\frac{1}{4(k+1)} \right)n\right) \\
&=\frac{2 \mathbf{x}_v}{\mathbf{x}^T\mathbf{x}} \left( \frac{n}{4(k+1)}-35k^2\right)> 0,
\end{align*}
where the last step uses $n$ large enough and that if $v\in L$, then $d_G(v) \leq \left(\frac{1}{2} - \frac{1}{4k+4}\right)n$. This contradicts $G$ has the largest spectral radius over all $F_k$-free graphs and so $L$ must be empty.
\end{proof}

Next we may refine the structure of $G$.

\begin{lemma}\label{STlambdarefine}
For $n$ and $k$ as before, we have
\begin{equation}\label{STrefine1}
\frac{n}{2}-4k
\le |S|,  |T|\le \frac{n}{2}+4k,
\end{equation}
\begin{equation}\label{STrefine2}
e(G)\ge \frac{n^2}{4}-12k^2,
\end{equation}
and
\begin{equation}
\label{lambdarefine}
\frac{n}{2}-14k^2\le \delta(G)\le \lambda_1\le \Delta (G)\le \frac{n}{2}+5k.
\end{equation}
\end{lemma}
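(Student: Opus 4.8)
Here is how I would approach the proof, building on the fact that by Lemmas~\ref{W-Lenpty} and~\ref{Lempty} both $W$ and $L$ are now empty. Running the proof of Claim~1 inside Lemma~\ref{Lempty} with $S\setminus L=S$ and $T\setminus L=T$ shows that $G[S]$ and $G[T]$ are $K_{1,k}$-free and $M_k$-free; hence $d_S(v)\le k-1$ for every $v\in S$, $d_T(v)\le k-1$ for every $v\in T$, $e(G[S]),\,e(G[T])\le f(k-1,k-1)$, and there exist independent sets $I_S\subseteq S$, $I_T\subseteq T$ with $|I_S|\ge |S|-f(k-1,k-1)$ and $|I_T|\ge |T|-f(k-1,k-1)$. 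Moreover, taking $z\in S$ as before and repeating the computation leading to \eqref{Lempty2}, but now bounding $d_S(z)\le k-1$ and $|T\setminus I_T|\le f(k-1,k-1)$ in place of the crude estimates $17k^2$ and $18k^2$, one sharpens it to $\sum_{v\in I_T}\mathbf{x}_v\ge \lambda_1-k^2$; this sharpening will be needed at the end.

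Next I would prove \eqref{STrefine2}. A triangle of $G$ lying entirely inside $G[S]$ or inside $G[T]$ is one of at most $O(k^3)$ such triangles, since each of $G[S],G[T]$ has $O(k^2)$ edges and maximum degree at most $k-1$. Every remaining triangle has two vertices on one side of the partition and one on the other, hence contains a unique edge of $G[S]$ or of $G[T]$, and for a fixed such edge there are at most $\max\{|S|,|T|\}$ choices for the third vertex; so the number of these is at most $f(k-1,k-1)\bigl(|S|+|T|\bigr)=f(k-1,k-1)\,n$. Therefore $t\le f(k-1,k-1)\,n+O(k^3)\le k^2 n$ for $n$ large. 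Plugging this into Corollary~\ref{edgelower2} gives $e(G)\ge \lambda_1^2-6k^2$; on the other hand, squaring $\lambda_1\ge \frac{2\lceil n/2\rceil\lfloor n/2\rfloor+2f(k-1,k-1)}{n}$ from \eqref{first lower bound1} and using $\lceil n/2\rceil\lfloor n/2\rfloor\ge\frac{n^2-1}{4}$ yields $\lambda_1^2\ge \frac{n^2}{4}+2f(k-1,k-1)-\frac12$. Combining these and using $f(k-1,k-1)\ge k^2-\frac32k$ gives $e(G)\ge \frac{n^2}{4}+2f(k-1,k-1)-\frac12-6k^2\ge \frac{n^2}{4}-12k^2$, which is \eqref{STrefine2}.

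Then \eqref{STrefine1} is immediate: $|S||T|\ge e(S,T)=e(G)-e(G[S])-e(G[T])\ge \frac{n^2}{4}-12k^2-2f(k-1,k-1)\ge \frac{n^2}{4}-14k^2$, and since $|S|+|T|=n$, writing $|S|=\frac n2-a$ forces $a^2\le 14k^2$, so $|a|<4k$ and $\frac n2-4k\le |S|,|T|\le \frac n2+4k$. For \eqref{lambdarefine}, the chain $\delta(G)\le\lambda_1\le\Delta(G)$ is standard ($\delta(G)\le\frac{2e(G)}{n}\le\lambda_1$ by the Rayleigh quotient on the all-ones vector, and $\lambda_1\le\Delta(G)$ since every row sum of $A(G)$ is at most $\Delta(G)$). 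For the upper bound, every $v\in S$ has $d(v)=d_S(v)+d_T(v)\le(k-1)+|T|\le\frac n2+5k$ by \eqref{STrefine1}, and symmetrically for $v\in T$, so $\Delta(G)\le\frac n2+5k$. For the lower bound, suppose $d(v)<\lambda_1-k^2$ for some $v$, and form $G^{\ast}$ from $G$ by deleting all edges at $v$ and joining $v$ to $I_T\setminus\{v\}$. Since $I_T$ is independent, $G^{\ast}$ has no triangle through $v$, and $G^{\ast}-v=G-v$ is $F_k$-free, so $G^{\ast}$ is $F_k$-free; but by the refined form of \eqref{Lempty2},
\[
\lambda_1(G^{\ast})-\lambda_1(G)\ \ge\ \frac{2\mathbf{x}_v}{\mathbf{x}^T\mathbf{x}}\Bigl(\sum_{w\in I_T\setminus\{v\}}\mathbf{x}_w-\sum_{w\sim v}\mathbf{x}_w\Bigr)\ \ge\ \frac{2\mathbf{x}_v}{\mathbf{x}^T\mathbf{x}}\bigl(\lambda_1-k^2-d(v)\bigr)\ >\ 0,
\]
contradicting the maximality of $\lambda_1(G)$. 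Hence $\delta(G)\ge\lambda_1-k^2\ge\frac n2-k^2\ge\frac n2-14k^2$.

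The crux is the triangle count $t\le k^2 n$: it is far sharper than the bound $t\le\frac{k}{3}n^2$ available before Lemma~\ref{maxcut}, and it rests on already knowing $W=L=\emptyset$, which is precisely what gives $e(G[S]),e(G[T])=O(k^2)$. Getting this estimate — together with a careful enough lower bound on $\lambda_1^2$ from the explicit extremal graphs — is exactly what forces $e(G)$ to within $O(k^2)$ of $\frac{n^2}{4}$ and hence pins $|S|,|T|$ to within $4k$ of $\frac n2$; tracking the numerical constants closely enough to land at $12k^2$ and $14k^2$ (rather than something larger) is the only real care required, and everything after \eqref{STrefine1} is bookkeeping.
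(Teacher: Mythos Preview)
Your proof is correct and follows essentially the same skeleton as the paper: use $W=L=\emptyset$ to bound $e(G[S])+e(G[T])<2k^2$, hence the triangle count is $O(k^2n)$, feed this into Corollary~\ref{edgelower2} to get \eqref{STrefine2}, deduce \eqref{STrefine1} from $|S||T|\ge e(S,T)$, and read off $\Delta(G)\le\frac n2+5k$ from $d_S(v)\le k-1$ plus \eqref{STrefine1}. Your triangle count and $\lambda_1^2$ estimate are slightly sharper than the paper's (the paper simply uses $t\le 2k^2n$ and $\lambda_1^2\ge n^2/4$), but this is inessential.

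The one genuine divergence is the lower bound on $\delta(G)$. The paper argues by edge removal: if $d(v)<\frac n2-14k^2$ then $e(G-v)\ge \frac{n^2}{4}-12k^2-(\frac n2-14k^2)>\frac{(n-1)^2}{4}+k^2$, so $G-v$ contains $F_k$ by Theorem~\ref{Erdos95}. You instead recycle the spectral perturbation argument from Lemma~\ref{Lempty}, refining \eqref{Lempty2} to $\sum_{v\in I_T}\mathbf{x}_v\ge\lambda_1-k^2$ (which does hold, since $(k-1)+f(k-1,k-1)\le k^2-1$) and then re-attaching a putative low-degree vertex to $I_T$. Your route gives the stronger conclusion $\delta(G)\ge\lambda_1-k^2>\frac n2-k^2$ and stays within the spectral framework, at the cost of a little extra setup; the paper's route is a two-line computation that leans on the Tur\'an-type Theorem~\ref{Erdos95} already in hand. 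Either works.
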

\begin{proof}
 Since $L$ and $W$ are empty, we have that both $G[S]$ and $G[T]$ are $K_{1,k}$- and $M_k$-free, and so we have $e(S) + e(T) \leq 2f(k-1,k-1) < 2k^2$.
   This means that the number of triangles in $G$ is bounded above by $2k^2n$ since any triangle contains an  edge of  $E(S)\cup E(T)$.
       By Corollary \ref{edgelower2},
       we have $$e(G) \geq \lambda_1^2-\frac{6t}{n}\ge \frac{n^2}{4}  -\frac{12k^2n}{n  }=\frac{n^2}{4}-12 k^2.$$

       Suppose that $|S|\le \frac{n}{2}-4k,$ then $|T|=n-|S|\ge \frac{n}{2}+4k$.
       Hence
       $$e(G)=e(S)+e(T)+e(S,T)\le 2k^2+|S||T|\le 2k^2+ \left (\frac{n}{2}-4k \right) \left(\frac{n}{2}+4k \right)= \frac{n^2}{4}-14k^2,$$
        which contradicts to $e(G)\geq  \frac{n^2}{4}-12k^2$.

        So we have
        $$
        \frac{n}{2}-4k
\le |S|, |T|\le \frac{n}{2}+4k.
$$

Moreover, the maximum degree of $G[S]$  is at most $k-1$, since it contains no $K_{1, k}$, otherwise,  by the proof of Lemma \ref{W-Lenpty}, i.e.,
for $u, u_1, \ldots, u_k\in S$ with $uu_1, \ldots, uu_k$ being edges, we obtain
$$| N_T(u)\cap N_T(u_1)\cap\cdots \cap N_T(u_k)|\ge k.$$
So there exist $v_1, \ldots, v_k \in T$ such that  the  subgraph induced by  $u, u_1, \ldots, u_k, v_1, \ldots, v_k$  is a complete bipartite graph.
 So we have one  $F_k$, that is, $uu_1v_1, \ldots, uu_kv_k$.
This implies that
 $$\Delta(G)\le \frac{n}{2}+4k+k-1\le \frac{n}{2}+5k.$$
  So $$\lambda_1\le \Delta(G)\le \frac{n}{2}+5k.$$
Furthermore, we claim that the minimum degree of $G$ is at least $\frac{n}{2} - 14k^2$.  Otherwise,
 removing a vertex  $v$ of minimum  degree $d(v)$, we have
\begin{eqnarray*}
 e(G-v)&=&e(G)-d(v)\\
 &\ge & \frac{n^2}{4}-12k^2- \left( \frac{n}{2}-14k^2 \right)\\
 &= & \frac{n^2}{4}-\frac{n}{2}+2k^2 \\
 &=& \frac{(n-1)^2}{4}+k^2+k^2-\frac{1}{4}\\
 &> & \frac{(n-1)^2}{4}+k^2,
\end{eqnarray*}
which implies $G-v$ contains $F_k$ by Theorem  \ref{Erdos95}.
\end{proof}

\begin{lemma}\label{second lower bound evector2}
For all $u\in V(G)$,  we have that $\mathbf{x}_u\geq 1-\frac{116 k^2}{n}$.
\end{lemma}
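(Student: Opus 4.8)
The plan is to read the claim off the eigenvalue equation. Writing $\lambda_1\mathbf{x}_u=\sum_{v\sim u}\mathbf{x}_v=d(u)-\sum_{v\sim u}(1-\mathbf{x}_v)$ and recalling from Lemma~\ref{STlambdarefine} that $\delta(G)\ge\frac n2-14k^2$, $\lambda_1\le\Delta(G)\le\frac n2+5k$ and $\tfrac n2\le\lambda_1$, it suffices to show that for every vertex $u$ the ``deficiency sum'' $\sum_{v\sim u}(1-\mathbf{x}_v)$ is only $O(k^2)$; then dividing by $\lambda_1$ gives $\mathbf{x}_u\ge 1-O(k^2)/n$. Throughout I use that $L=W=\emptyset$ (Lemmas~\ref{W-Lenpty} and~\ref{Lempty}), so that $G[S]$ and $G[T]$ have maximum degree at most $k-1$, and that $|S|,|T|\le\frac n2+4k$. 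Normalize so $\mathbf{x}_z=1$ and assume without loss of generality $z\in S$. The argument runs in two stages: first I bound $\mathbf{x}_u$ for $u$ in the part $S$ containing $z$, then I bootstrap from that to the part $T$.

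For the first stage, fix $u\in S$. Since $d_S(u)\le k-1$, at most $k-1$ neighbours of $u$ lie in $S$, so $\sum_{v\sim u}(1-\mathbf{x}_v)\le (k-1)+\sum_{v\in T}(1-\mathbf{x}_v)$. To control $\sum_{v\in T}(1-\mathbf{x}_v)=|T|-\sum_{v\in T}\mathbf{x}_v$, I apply the eigenvalue equation at $z$: because $d_S(z)\le k-1$,
\[
\lambda_1=\sum_{w\sim z}\mathbf{x}_w\le (k-1)+\sum_{w\in T}\mathbf{x}_w ,
\]
hence $\sum_{w\in T}\mathbf{x}_w\ge\lambda_1-(k-1)\ge\frac n2-(k-1)$, and so $\sum_{v\in T}(1-\mathbf{x}_v)\le(\frac n2+4k)-(\frac n2-k+1)=5k-1$. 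Therefore $\sum_{v\sim u}(1-\mathbf{x}_v)\le 6k-2$, which gives $\lambda_1\mathbf{x}_u\ge\delta(G)-(6k-2)\ge\frac n2-14k^2-6k+2$; dividing by $\lambda_1\le\frac n2+5k$ yields $\mathbf{x}_u\ge 1-\frac{14k^2+11k-2}{n/2+5k}\ge 1-\frac{50k^2}{n}$ for $n$ large.

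For the bootstrap, now that $\mathbf{x}_v\ge 1-\frac{50k^2}{n}$ for every $v\in S$, we get $\sum_{v\in S}(1-\mathbf{x}_v)\le|S|\cdot\frac{50k^2}{n}\le(\frac n2+4k)\frac{50k^2}{n}\le 26k^2$ for $n$ large. For $u\in T$ we have $d_T(u)\le k-1$, so the same splitting gives $\sum_{v\sim u}(1-\mathbf{x}_v)\le(k-1)+\sum_{v\in S}(1-\mathbf{x}_v)\le 27k^2$, whence $\lambda_1\mathbf{x}_u\ge\delta(G)-27k^2\ge\frac n2-41k^2$ and $\mathbf{x}_u\ge 1-\frac{41k^2+5k}{n/2+5k}\ge 1-\frac{116k^2}{n}$. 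Combining the two stages proves the statement for all $u\in V(G)$; note the constant $116$ is not optimized.

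I do not anticipate a real obstacle: conceptually everything reduces to the eigenvalue equation together with the near-regularity and the sparseness of $G[S]$ and $G[T]$ already established. The only genuine subtlety is that the useful heuristic ``almost all neighbours lie on the opposite side, where the entries are large'' can only be turned into a bound inside the part containing $z$ — one first pins down the entries on $S$ and then feeds that estimate into the bound on $T$ — and, secondarily, one must keep the constants honest so that they survive the successive divisions by $\lambda_1$, which is why a generous bound such as $116k^2/n$ is what gets stated.
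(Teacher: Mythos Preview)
Your proof is correct and in fact slightly tighter than the paper's (you obtain $1-\tfrac{50k^2}{n}$ on $S$ versus the paper's $1-\tfrac{76k^2}{n}$). Both arguments follow the same two-stage outline---first bound entries on the side containing $z$, then bootstrap to the other side---but the implementations of the first stage differ. The paper estimates $\lambda_1(\mathbf{x}_u-\mathbf{x}_z)$ directly: it computes the overlap $|N_T(u)\cap N_T(z)|\ge\frac n2-32k^2$ and uses it to control the discrepancy between the two eigenvalue equations, arriving at $\mathbf{x}_u\ge 1-\tfrac{76k^2}{n}$. You instead extract from the equation at $z$ a \emph{global} bound $\sum_{v\in T}(1-\mathbf{x}_v)\le 5k-1$, then feed that into the deficiency identity $\lambda_1\mathbf{x}_u=d(u)-\sum_{v\sim u}(1-\mathbf{x}_v)$. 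Your route avoids the neighbourhood-intersection arithmetic and the set inclusion--exclusion; the paper's route is a vertex-to-vertex comparison that does not require bounding a sum over all of $T$. Either works here because $d_S(z)$ and $d_S(u)$ are only $O(k)$ (you correctly cite the $K_{1,k}$-freeness of $G[S]$ established in the proof of Lemma~\ref{STlambdarefine}, whereas the paper uses the cruder $d_S(u)\le e(G[S])\le k^2$).
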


\begin{proof} Without loss of generality, we may  assume that $z\in S$.  We consider the following two cases.

{\bf Case 1:}    $u\in S$.  Then $d_S(u)\le k^2 $ as  $e(G[S])\le k^2$. Hence we obtain
\begin{eqnarray*}
|N_T(u)|&=&d_T(u)= d(u)-d_S(u)\ge \delta(G)-d_S(u)\ge \frac{n}{2}-14k^2-k^2\\
&=&\frac{n}{2}-15k^2.\\ [2mm]
|N_T(u)\cap N_T(z)| &=& |N_T(u)|+|N_T(z)|-|N_T(u)\cup N_T(z)|\ge 2\delta(G)-|T|\\
&\ge& 2 \left(\frac{n}{2}-14 k^2 \right)- \left(\frac{n}{2}+4k \right)\ge \frac{n}{2}-32 k^2.\\[2mm]
\lambda_1x_u-\lambda_1x_z &=& \sum_{v\sim u, v\in T, v\sim z}x_v+\sum_{v\sim u, v\in T, v\not\sim z}x_v+\sum_{v\sim u, v\in S}x_v\\
& &- \sum_{v\sim z, v\in T, v\sim u}x_v-\sum_{v\sim z, v\in T, v\not\sim u}x_v-\sum_{v\sim z, v\in S}x_v\\
&\ge & -\sum_{v\sim z, v\in T, v\not\sim u}x_v-\sum_{v\sim z, v\in S}x_v\\
&\ge & -\sum_{v\sim z, v\in T,  v\not\sim u}1-\sum_{v\sim z, v\in S}1\\
&\ge & -(d_T(z)-|N_T(u)\cap N_T(z)|)- d_S(z)\\
&\ge & - \left( \left(\frac{n}{2}+5k \right)-  \left(\frac{n}{2}-32k^2\right) \right)- k\\
&\ge & -38 k^2.
\end{eqnarray*}
Therefore,  for any $u\in S$, we have
\begin{equation}\label{verct1}
x_u\ge 1-\frac{38k^2}{\lambda_1}> 1-\frac{38k^2}{ \frac{n}{2}}=1-\frac{76k^2}{n}.
\end{equation}

{\bf Case 2:}   $u\in T$. By (\ref{verct1}),
 $$\lambda_1x_u=\sum_{v\sim u}x_v\ge \sum_{v\sim u, v\in S}x_v\ge \left(1-\frac{76k^2}{n} \right)d_S(u).$$
 Since
 $$
 \frac{n}{2}-14k^2\leq \delta(G)\le d(u)=d_S(u)+d_T(u),
 $$
  and $d_T(u)\le k$  as  the maximum degree  in $G[T]$ is at most $k-1$,
  we have $d_S(u)\ge \frac{n}{2}-14k^2-k\ge  \frac{n}{2}-15k^2$.
 Hence
 \begin{eqnarray*}
 x_u&\ge & \frac{(1-\frac{76k^2}{n})d_S(u)}{\lambda_1} \ge \frac{(1-\frac{76 k^2}{n})(\frac{n}{2}-15k^2)}{\frac{n}{2}+5k} \\
& =& \frac{\frac{n}{2}-53k^2+\frac{1140k^4}{n}}{\frac{n}{2}+5k} \\
 &=&1-\frac{53k^2+5k-\frac{1140 k^4}{n}}{\frac{n}{2}+5k}\\
 &>& 1-\frac{58 k^2}{\frac{n}{2}}=1-\frac{116k^2}{n}.
 \end{eqnarray*}

 From the above two cases, the result follows.
\end{proof}

Using this refined bound on the eigenvector entries, we may show that the partition $V=S\cup T$ is balanced.

\begin{lemma}\label{cut balanced}
The sets $S$ and $T$ have sizes as close as possible. That is
\[
\big||S| - |T|\big| \leq 1.
\]
\end{lemma}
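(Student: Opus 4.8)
The plan is to argue by contradiction: suppose without loss of generality that $|S| \geq |T| + 2$, and show that one can strictly increase the spectral radius while staying $F_k$-free, contradicting the maximality of $G$. The natural move is a \emph{vertex relocation}: pick a suitable vertex $v \in S$ (one whose eigenvector entry is not too large, or more simply any vertex, since by Lemma~\ref{second lower bound evector2} all entries are within $O(k^2/n)$ of $1$), delete all its current edges, and reconnect it to a large independent set inside $T$. Since $G[T]$ has maximum degree at most $k-1$ and at most $k^2$ edges (Lemma~\ref{Lempty}, Lemma~\ref{STlambdarefine}), removing one endpoint of each edge of $G[T]$ leaves an independent set $I_T \subseteq T$ with $|I_T| \geq |T| - k^2$. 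Attaching $v$ to all of $I_T$ creates no triangle (the new neighborhood of $v$ is independent), so the modified graph $G'$ is still $F_k$-free.

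Next I would estimate the Rayleigh-quotient change. Writing $G'$ for the relocated graph,
\[
\lambda_1(G') - \lambda_1(G) \geq \frac{\mathbf{x}^T(A(G') - A(G))\mathbf{x}}{\mathbf{x}^T\mathbf{x}}
= \frac{2\mathbf{x}_v}{\mathbf{x}^T\mathbf{x}}\left(\sum_{w \in I_T}\mathbf{x}_w - \sum_{u \sim v}\mathbf{x}_u\right).
\]
By Lemma~\ref{second lower bound evector2} each $\mathbf{x}_w \geq 1 - \tfrac{116k^2}{n}$, so $\sum_{w \in I_T}\mathbf{x}_w \geq (|T| - k^2)(1 - \tfrac{116k^2}{n}) \geq |T| - O(k^2)$. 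On the other side, $\sum_{u \sim v}\mathbf{x}_u \leq d(v) \leq \Delta(G) \leq \tfrac{n}{2} + 5k$ by \eqref{lambdarefine}. The key point is that $v \in S$ has, by the maximum-cut property, at least half its neighbors in $T$, hence at most $|S| - 1 - d_T(v) + d_S(v)$ is not quite the right bookkeeping; instead one uses $d(v) \leq |S| - 1 + d_T(v) \le$ ... more cleanly: since $G[S]$ has max degree $\le k-1$, $d_S(v) \le k-1$, so $d_T(v) = d(v) - d_S(v)$, and also $d_T(v) \le |T|$. Thus $\sum_{u\sim v}\mathbf{x}_u \le d_S(v) + d_T(v) \le (k-1) + |T|$. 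Comparing, $\sum_{w\in I_T}\mathbf{x}_w - \sum_{u\sim v}\mathbf{x}_u \geq (|T| - O(k^2)) - (|T| + k - 1) $, which is only $-O(k^2)$ --- not yet positive. The extra gain must come from the size imbalance: because $|S| \geq |T| + 2$, relocating $v$ from $S$ to $T$ should be analyzed more carefully, e.g. by instead comparing $d_T(v)$ against $|T|$ and exploiting that a vertex on the \emph{larger} side cannot have too many cut-neighbors relative to the balanced count. The cleanest route is: $d_T(v) \le |T|$ while after relocation $v$ gains $|I_T| \ge |T| - k^2$ neighbors; simultaneously each of $v$'s former $S$-neighbors loses an edge but $v$ had at most $k-1$ of those. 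So the honest comparison is $|I_T| \ge |T| - k^2$ versus the \emph{weighted} old degree $\le d_S(v) + d_T(v)\cdot 1$, and since $d_T(v) \le |T|$ this is tight. To break the tie we use that the partition is a \emph{maximum} cut: if $v$'s new placement (it is now effectively on the $T$-side, adjacent into a subset of $T$) --- actually one should instead move $v$ and recount the cut; a maximum cut forces $d_S(v) \le d_T(v)$, and after the swap we need $d_T(v)$ (old) $\le |I_T|$; combined with $|S| \ge |T|+2$ and $|S| + |T| = n$ one gets $|T| \le n/2 - 1$, so $|I_T| \ge n/2 - 1 - k^2$; meanwhile $d_T(v) \le |T| \le n/2 - 1$ --- still tight by one. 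The genuine slack comes from also deleting the $d_S(v)$ edges: net change $\ge |I_T| - d_T(v) - d_S(v)$ is not positive, so in fact one must relocate to the \emph{better} of the two sides or relocate a whole batch of vertices.

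The cleaner argument, which I expect the authors use, is the following: keep relocating vertices from $S$ to $I_T$ one at a time as long as $|S| > |T| + 1$; at each step the graph stays $F_k$-free and $\lambda_1$ does not decrease (a careful Rayleigh computation shows the change is $\ge \frac{2\mathbf{x}_v}{\mathbf{x}^T\mathbf{x}}(|I_T'| - d(v))$ where $|I_T'|$ now exceeds $d(v)$ once the sides are close to balanced, using $d(v) \le \tfrac n2 + 5k$ from \eqref{lambdarefine} and $|I_T'| \ge |T| - O(k^2) \ge \tfrac n2 - O(k^2)$ only once $|T|$ has grown past $\tfrac n2 - \omega(k^2)$ --- which it has, since $|T| \geq \tfrac n2 - 4k$ by \eqref{STrefine1}!). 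Indeed the point I missed: \eqref{STrefine1} already gives $|T| \ge \tfrac n2 - 4k$, so $|I_T| \ge \tfrac n2 - 4k - k^2 \ge \tfrac n2 - 2k^2$, whereas $d(v) \le \tfrac n2 + 5k$. These differ by only $O(k^2)$, so a single relocation still may not strictly help. The resolution: one uses the eigenvector equation at $v$ itself, $\lambda_1 \mathbf{x}_v = \sum_{u \sim v}\mathbf{x}_u \le d(v)$, to get $d(v) \ge \lambda_1 \ge \tfrac n2$, and if $d(v)$ were as large as $\tfrac n2 + 5k$ then $v$ has $\ge \tfrac n2 + 5k - (k-1) = \tfrac n2 + 4k + 1 > |T|$ neighbors in... no, $d_T(v) \le |T| \le \tfrac n2 + 4k$, contradiction, forcing $d(v) \le |T| + k - 1$. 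So $d(v) \le |T| + k - 1$, and the net Rayleigh change is $\ge \frac{2\mathbf{x}_v}{\mathbf{x}^T\mathbf{x}}\big(|I_T| - d(v)\big) \ge \frac{2\mathbf{x}_v}{\mathbf{x}^T\mathbf{x}}\big((|T| - k^2) - (|T| + k - 1)\big) = \frac{2\mathbf{x}_v}{\mathbf{x}^T\mathbf{x}}\cdot(-k^2 - k + 1)$, still negative by $O(k^2)$. \textbf{The genuine fix} is to not delete $v$'s edges to $S$ but only add the missing edges to $I_T \setminus N(v)$: since adding edges from $v$ into an independent set of $T$ still creates no $F_k$ (the common-neighbourhood argument for triangles only needs $I_T$ independent), $G'' = G + \{vw : w \in I_T \setminus N(v)\}$ is $F_k$-free, and now $\lambda_1(G'') - \lambda_1(G) \ge \frac{2\mathbf{x}_v}{\mathbf{x}^T\mathbf{x}}\sum_{w \in I_T \setminus N(v)}\mathbf{x}_w \ge 0$, with strict inequality unless $I_T \subseteq N(v)$. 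Hence we may assume $v$ is adjacent to \emph{all} of $I_T$ for every $v \in S \setminus L = S$; a symmetric statement holds in $T$. This near-completeness between the sides, together with $e(S) + e(T) \le 2f(k-1,k-1)$, pins down $e(G)$ as a function of $|S|$ and $|T|$, maximized when $\big||S|-|T|\big|$ is minimized; since $\lambda_1 \geq \tfrac{\mathbf 1^T A \mathbf 1}{n} \geq \tfrac{2e(G)}{n}$ and a single edge swap rebalancing the sides strictly increases the edge count (because $|S||T|$ is strictly increasing as the sides become balanced and the $e(S), e(T)$ terms are bounded), one obtains a contradiction unless $\big||S| - |T|\big| \le 1$.

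The step I expect to be the main obstacle is the edge-swap / relocation bookkeeping: one must choose the modification (which edges to delete, which to add) so that $F_k$-freeness is preserved \emph{and} the Rayleigh quotient strictly increases, and the naive "move a vertex" version loses an additive $O(k^2)$ that must be recovered either by the "only-add-edges" trick above or by iterating the relocation and tracking that once the sides are within $O(k^2)$ of balanced the degree bound $d(v) \le |T| + k - 1 < |I_T|$ finally flips sign. Making this iteration terminate correctly — and checking that it never destroys the structural facts (empty $L$, empty $W$, degree bounds) established in Lemmas~\ref{Lempty}--\ref{STlambdarefine}, which were proved for $G$ and must survive the modification or be re-derived — is where the real care lies. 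Everything else (the common-neighborhood counting showing the added neighborhood contains no triangle, hence no $F_k$; the eigenvector-entry lower bound feeding the Rayleigh estimate) is routine given the earlier lemmas.
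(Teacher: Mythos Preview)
Your relocation approach never closes. The honest accounting you carry out shows that moving a single vertex $v$ from the larger side $S$ to an independent set $I_T\subseteq T$ changes the Rayleigh quotient by at most
\[
\frac{2\mathbf{x}_v}{\mathbf{x}^T\mathbf{x}}\bigl(|I_T|-d(v)\bigr)
\;\ge\;
\frac{2\mathbf{x}_v}{\mathbf{x}^T\mathbf{x}}\bigl((|T|-k^2)-(|T|+k-1)\bigr),
\]
which is negative by a fixed $\Theta(k^2)$. Your proposed fix, ``only add the missing edges $vw$ with $w\in I_T\setminus N(v)$'', does \emph{not} keep the graph $F_k$-free in general: $v$ still has its (up to $k-1$) neighbours $u_1,\dots,u_j$ in $S$ and its old neighbours in $T\setminus I_T$, and each new edge $vw$ can form a triangle $vwu_i$ (since $u_i\in S$ may well be adjacent to $w\in T$) or $vwu$ with $u\in N_T(v)\setminus I_T$. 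A matching of size $k$ in $N(v)$ is then easy to build, so $G''$ can contain $F_k$. The subsequent edge-count remark (``$e(G)$ is maximised when the sides are balanced, and $\lambda_1\ge 2e(G)/n$'') is also not a contradiction: $2e(G)/n$ is a \emph{lower} bound on $\lambda_1$, so showing that some other $F_k$-free graph has more edges says nothing about its spectral radius relative to $G$.

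The paper avoids all graph modification and instead bounds $\lambda_1(G)$ from above directly. Writing $B=K_{|S|,|T|}$, $G_1=G[S]\cup G[T]$, and $G_2$ for the bipartite edges missing from $G$, one has $A(G)=A(B)+A(G_1)-A(G_2)$ and hence
\[
\lambda_1(G)=\frac{\mathbf{x}^TA(B)\mathbf{x}}{\mathbf{x}^T\mathbf{x}}+\frac{\mathbf{x}^TA(G_1)\mathbf{x}}{\mathbf{x}^T\mathbf{x}}-\frac{\mathbf{x}^TA(G_2)\mathbf{x}}{\mathbf{x}^T\mathbf{x}}
\;\le\;\sqrt{|S||T|}+\frac{2f(k-1,k-1)}{n}+O\!\left(\frac{k^4}{n^2}\right),
\]
using $e(G_1)-e(G_2)\le f(k-1,k-1)$, $e(G_2)=O(k^2)$, and the eigenvector bound $\mathbf{x}_u\ge 1-116k^2/n$ to control the denominator. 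Comparing with the lower bound $\lambda_1\ge \frac{2}{n}\lfloor n/2\rfloor\lceil n/2\rceil+\frac{2f(k-1,k-1)}{n}$ from~\eqref{first lower bound1} yields
\[
\frac{2}{n}\left\lfloor\frac{n}{2}\right\rfloor\left\lceil\frac{n}{2}\right\rceil-\sqrt{|S||T|}\;\le\;O\!\left(\frac{k^4}{n^2}\right).
\]
An elementary estimate (the paper's Lemma~2.4, treating $n$ even and odd separately) shows that whenever $\big||S|-|T|\big|\ge 2$ the left side is at least $1/n$, which is impossible for large $n$. This is the missing idea: trade the unstable ``improve $G$'' strategy for a two-sided estimate on $\lambda_1$ whose error term is $O(n^{-2})$, one order smaller than the $\Theta(n^{-1})$ gap created by an imbalance of $2$.
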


\begin{proof}
Without loss of generality, we may assume that $|T| \geq |S|$. Denote
\begin{align*}
S' &:= \{v\in S: N(v) \subseteq T\},\\
T' &:= \{v\in T: N(v) \subseteq S\}.
\end{align*}
Since $e(G[S]) \le k^2$, there exist at most $2k^2$ vertices in $S$   having a neighbor in $S$. Hence
$$
|S'|\ge |S|-2k^2.
$$
Similarly,
$$|T'|\ge |T|-2k^2.
$$
 Let $C\subseteq T'$ be a set having $|T|-|S|$ vertices. Because,  from (\ref{STrefine1}),  $|T|-|S|\le 8k$ and $|T'|\ge |T|-2k^2\geq  \frac{n}{2} -4k -2k^2>8k$.
 Then $G\setminus C$ is a graph on $2|S|$ vertices such that
$$e(G)-e(C,S)=e(G\setminus C)\le \mathrm{ex} (2|S|, F_k)\le\frac{(2|S|)^2}{4}+f(k-1, k-1).$$
Hence
$$e(G)\le |S|^2+|C||S|+f(k-1, k-1)=|S||T|+f(k-1, k-1).$$

Let $B=K_{|S|, |T|}$ be the complete bipartite graph with partite sets $S$ and $T$, and let $G_1 = G[S] \cup G[T]$ and $G_2$ be the graph with edges $E(B) \setminus E(G)$. Note that $e(G) = e(B) + e(G_1) - e(G_2)$ and so
$e(G_1) - e(G_2) =e(G)-e(B)
\leq f(k-1,k-1)$.
Note also that $e(G_1) \leq 2k^2$, then by Lemma \ref{second lower bound evector2} we have, for sufficiently large $n$,
$$
\mathbf{x}^T\mathbf{x} \geq n \left(1-\frac{116k^2}{n} \right)^2 > n \left(1-\frac{232k^2}{n} \right)=n-232k^2,
$$
 and that $\lambda_1(B) = \sqrt{|S||T|}$.  By Lemma~\ref{STlambdarefine},  we obtain
 $$e(S, T)=e(G)-e(G_1)\ge \frac{n^2}{4}-12k^2-2k^2=\frac{n^2}{4}-14k^2,$$
 which implies that
 $$
 e(G_2)=e(B)-e(S,T)\le |S||T|- \left(\frac{n^2}{4}-14k^2 \right)\le 14k^2.
 $$
 So  we have
\begin{align*}
&\frac{2}{n} \left \lfloor\frac{n}{2} \right \rfloor \left\lceil\frac{n}{2} \right\rceil + \frac{ 2 f(k-1,k-1)}{n}\leq \\
  \lambda_1
 &= \frac{\mathbf{x}^T (A(B) + A(G_1) - A(G_2))\mathbf{x}}{\mathbf{x}^T\mathbf{x}} \\
  &=  \frac{\mathbf{x}^T A(B)\mathbf{x}}{\mathbf{x}^T\mathbf{x}}+ \frac{\mathbf{x}^T A(G_1)\mathbf{x}}{\mathbf{x}^T\mathbf{x}}-\frac{\mathbf{x}^T A(G_2)\mathbf{x}}{\mathbf{x}^T\mathbf{x}}
  \\
  & \leq \lambda_1(B) + \frac{2e(G_1)}{\mathbf{x}^T\mathbf{x}}  -\frac{2e(G_2)(1-\frac{232 k^2}{n})}{\mathbf{x}^T\mathbf{x}}\\
   & \leq \lambda_1(B) + \frac{2(e(G_1) -e(G_2))}{\mathbf{x}^T\mathbf{x}}+ \frac{2e(G_2)\frac{232k^2}{n}}{\mathbf{x}^T\mathbf{x}}\\
  & \leq \sqrt{|S||T|} + \frac{2 f(k-1, k-1)}{\mathbf{x}^T\mathbf{x}} + \frac{{2\cdot 14k^2}\frac{232k^2}{n}} {\mathbf{x}^T\mathbf{x}}\\
   & \leq \sqrt{|S||T|} + \frac{2 f(k-1, k-1)}{n} +2f(k-1, k-1)(\frac{1}{\mathbf{x}^T\mathbf{x}}-\frac{1}{n})+  \frac{{2\cdot 14k^2}\frac{232k^2}{n}} {n(1-\frac{232 k^2}{n})}\\
    & \leq \sqrt{|S||T|} + \frac{2 f(k-1, k-1)}{n} +2k^2(\frac{1}{n(1-\frac{232k^2}{n})}-\frac{1}{n})+  \frac{{2\cdot 14k^2}\frac{232k^2}{n}} {n(1-\frac{232k^2}{n})}\\
     & = \sqrt{|S||T|} + \frac{2 f(k-1, k-1)}{n}+\frac{464k^4}{n(n-232k^2)}+\frac{6496k^4}{n(n-232 k^2)}\\
     & = \sqrt{|S||T|} + \frac{2 f(k-1, k-1)}{n}+\frac{6960 k^4}{n(n-232 k^2)}.\\
 \end{align*}
 Then
 \begin{equation}\label{last}
 \frac{2}{n} \left \lfloor\frac{n}{2} \right \rfloor  \left\lceil\frac{n}{2} \right \rceil    -\sqrt{|S||T|}\le \frac{6960 k^4}{n(n-232 k^2)}.
 \end{equation}
 Suppose that $|T|\ge |S|$ and $|T|\ge |S|+2$.
 We consider two cases.

 {\bf Case 1:} $n$ is even. Since $|S|+|T|=n$, we  have
 \begin{eqnarray*}
  \frac{2}{n} \left \lfloor\frac{n}{2} \right \rfloor  \left\lceil\frac{n}{2} \right \rceil  -\sqrt{|S||T|} & \ge&  \frac{n}{2}-\sqrt{ \left (\frac{n}{2}-1 \right) \left(\frac{n}{2}+1 \right)}\\
&=&\frac{n}{2}-\sqrt{\frac{n^2}{4}-1}=\frac{1}{\frac{n}{2}+\sqrt{\frac{n^2}{4}-1}}>\frac{1}{n}.
  \end{eqnarray*}
 So by (\ref{last}), we have
 $$\frac{1}{n}< \frac{2}{n}\left \lfloor\frac{n}{2} \right \rfloor  \left\lceil\frac{n}{2} \right \rceil -\sqrt{|S||T|}\le  \frac{6960 k^4}{n(n-232 k^2)} \le \frac{7000 k^4}{n^2}.
 $$
 It is a contradiction for sufficiently  large $n$.

 {\bf Case 2:} $n$ is odd. Since $|S|+|T|=n$, we  have
   \begin{eqnarray*}
    \frac{2}{n}\left \lfloor\frac{n}{2} \right \rfloor  \left\lceil\frac{n}{2} \right \rceil -\sqrt{|S||T|}
    &\ge & \frac{n^2-1}{2n}-\sqrt{\left (\frac{n-3}{2} \right) \left(\frac{n+3}{2} \right)}\\
&=& \frac{n-\frac{1}{n}}{2}-\frac{\sqrt{n^2-9}}{2}=\frac{(n-\frac{1}{n})^2-(n^2-9)}{2(n-\frac{1}{n}+\sqrt{n^2-9})}\\
&=& \frac{7+\frac{1}{n^2}}{2(n-\frac{1}{n}+\sqrt{n^2-9})}\ge \frac{1}{n}.
  \end{eqnarray*}
 So by (\ref{last}), we have
 $$\frac{1}{n}< \frac{2}{n} \left \lfloor\frac{n}{2} \right \rfloor  \left\lceil\frac{n}{2} \right \rceil-\sqrt{|S||T|}\le\frac{6960 k^4}{n(n-232 k^2)} \le \frac{7000 k^4}{n^2}.
 $$
It is a contradiction for sufficiently large $n$.
Therefore
for $n$ large enough we must have that $||S| - |T|| \leq 1$.
\end{proof}

Finally, we show that $e(G) = \mathrm{ex}(n, F_k)$.

{\bf  Proof of Theorem \ref{MainTH}}.
 By way of contradiction, we assume that $e(G) \leq \mathrm{ex}(n, F_k) - 1$. Let $H$ be an $F_k$-free graph  with $\mathrm{ex}(n, F_k)$ edges on the same vertex set as $G$, where $S$ and $T$ induce a complete bipartite graph in $H$ (this is possible because every graph in $\mathrm{Ex}(n, F_k)$ has a maximum cut of size $\lfloor n^2/4\rfloor$).  Let $E_+$ and $E_-$ be sets of edges such that $E(G) \cup E_+ \setminus E_- = E(H)$, and choose $E_+$ and $E_-$ to be as small as possible, i.e., $E_+=E(H)\setminus E(G)$ and $E_-=E(G)\setminus E(H)$. Since $|E(G)\cap E(H)|+|E_-|=e(G)< e(H)=|E(G)\cap E(H)+|E_+|$ which implies that
    $|E_+| \geq |E_-| + 1$. Furthermore, we have that $|E_-| \leq e(S) + e(T) < 2k^2$. Now, by the Rayleigh quotient \cite{horn}, we have that
\begin{align*}
\lambda_1(H) &\geq \frac{\mathbf{x}^T A(H) \mathbf{x}}{\mathbf{x}^T\mathbf{x}} = \lambda_1(G) +\frac{2}{\mathbf{x}^T\mathbf{x}} \sum_{ij\in E_+} \mathbf{x}_i\mathbf{x}_j - \frac{2}{\mathbf{x}^T\mathbf{x}} \sum_{ij\in E_-} \mathbf{x}_i\mathbf{x}_j \\
& \geq \lambda_1(G) + \frac{2}{\mathbf{x}^T\mathbf{x}} \left( |E_+|\left(1 - \frac{116k^2}{n}\right)^2 - |E_-|\right)\\
& \geq \lambda_1(G) + \frac{2}{\mathbf{x}^T\mathbf{x}} \left( |E_+|  - |E_-|-\frac{232k^2}{n}  |E_+| +\frac{(116k^2)^2}{n^2}  |E_+| \right)\\
& \geq \lambda_1(G) + \frac{2}{\mathbf{x}^T\mathbf{x}} \left( 1-\frac{232k^2}{n}  |E_+| +\frac{(116k^2)^2}{n^2}  |E_+| \right)\\
&>\lambda_1(G),
\end{align*}
for sufficiently  large $n$, where we are using that $|E_-| < 2k^2$ and $|E_+| \geq |E_-| + 1$. Therefore we have that for $n$ large enough, $\lambda_1(H) > \lambda_1(G)$, a contradiction.
Hence $e(G)=e(H)$.

From the above discussion, we complete the proof of Theorem \ref{MainTH}. $\blacksquare$

\frenchspacing

\end{document}